\theoremstyle{plain}
\newtheorem{theorem}{Theorem}[section]
\newtheorem{lemma}[theorem]{Lemma}
\newtheorem{proposition}[theorem]{Proposition}
\newtheorem{corollary}[theorem]{Corollary}
\theoremstyle{definition}
\newtheorem{notation}[theorem]{Notation}
\newtheorem{construction}[theorem]{Construction}
\newtheorem{remark}[theorem]{Remark}
\newtheorem{rmk}[theorem]{Remark}
\theoremstyle{remark}
\newcommand{\evnrow}{\rowcolor[gray]{0.95}}
\numberwithin{equation}{section}
\newcommand\KK{{\mathbb K}}
\newcommand\TT{{\mathbb T}}
\newcommand\ZZ{{\mathbb Z}}
\newcommand\QQ{{\mathbb Q}}
\newcommand\PP{{\mathbb P}}
\newcommand\OOO{{\mathcal O}}
\newcommand\rlv{{\rm rlv}}
\newcommand\Cl{\operatorname{Cl}}
\newcommand\Spec{{\rm Spec}}
\newcommand\Chi{{\mathbb X}}
\newcommand\bangle[1]{\langle #1 \rangle}
\newcommand\Gr{{\rm Gr}}
\newcommand\w{{\rm w}}
\newcommand\aGr{{\rm aGr}}
\newcommand\wGr{{\rm wGr}}
\begin{document}
\title[Smooth Fano intrinsic Grassmannians of type $(2,n)$ with $\rho(X) = 2$]%
{Smooth Fano intrinsic Grassmannians of type $(2,n)$ with Picard number two
}

\author[M.~I.~Qureshi]{Muhammad Imran Qureshi} 
\address{Deptartment of Mathematics \& Statistics, King Fahd University of Petroleum and Minerals, Saudi Arabia}
\email{imran.qureshi@kfupm.edu.sa}

\author[M.~Wrobel]{Milena Wrobel} 
\address{Institut f\"ur Mathematik, Carl von Ossietzky Universit\"at Oldenburg, 26111 Oldenburg, Germany}
\email{milena.wrobel@uni-oldenburg.de}

\begin{abstract}
We introduce the notion of intrinsic Grassmannians which generalizes the well known weighted Grassmannians. An intrinsic Grassmannian is a normal projective variety whose Cox ring is defined by the Plücker ideal $I_{d,n}$ of the Grassmannian $\mathrm{Gr}(d,n)$. 
We give a complete classification of all
smooth Fano intrinsic Grassmannians of type $(2,n)$ with Picard number two and 
prove an explicit formula to compute the total number of such varieties for an arbitrary $n$. We study their geometry and show that they satisfy Fujita's freeness conjecture. 
\end{abstract}

\maketitle
\vspace{-10mm}

\section{Introduction}\label{sec:intro}
This paper contributes to the classification of Fano varieties that means
algebraic varieties with ample anticanonical divisor class.
More precisely, we consider  
\emph{intrinsic Grassmannians of type $(d,n)$}, i.e.
normal projective varieties $X$ with finitely generated divisor class group $\mathrm{Cl}(X)$ and finitely generated Cox ring
$$
\mathcal{R}(X) := \bigoplus_{\Cl(X)} \Gamma(X, \mathcal{O}_X(D))
$$
admitting homogeneous generators such that the associated ideal of relations
is the Plücker ideal $I_{d,n}$ of~$\mathrm{Gr}(d,n)$.

The best studied examples are the so-called \emph{weighted Grassmannians} that form the case of intrinsic Grassmannians of Picard number one and
have been introduced by Corti and Reid~\cite{wg} in order to construct families of polarized varieties as quasilinear sections of weighted Grassmannians.
Note that the only smooth weighted Grassmannians are the standard Grassmannian varieties.

We go one step beyond and consider the case of intrinsic Grassmannians of Picard number two.
Our first main result gives a complete classification of all
smooth full intrinsic Grassmannians of type $(2,n)$,
where \emph{full} means, that all generators occur in the Plücker relations generating $I_{2,n}$. 
Note that any intrinsic Grassmannian $X$ is
fully determined by its Cox ring and its ample cone.
More precisely, we can regain $X$
as a certain geometric invariant theory quotient; see Construction \ref{constr:intrinsGrassmannian}.

\begin{theorem}\label{thm:classification}
Let $X$ be a smooth full intrinsic Grassmannian of type~$(2,n)$ with Picard number two. Then
the $\mathrm{Cl}(X) = \ZZ^2$-grading of 
$$\mathcal{R}(X)
= {\KK[T_{ij}; \ 1 \leq i < j\leq n ]/I_{2,n}}$$ 
and the semiample cone are of the type below;
we write $w_{ij} := \mathrm{deg}(T_{ij})$
for the $\mathrm{Cl}(X)$-degrees.

\vspace{1mm}
\noindent
Fix an integer $4 \leq k \leq n$ and a sequence of integers
$ 0 =  \alpha_k \leq \alpha_{k+1} \leq \ldots \leq \alpha_n$.
We have
\begin{enumerate}
    \item 
    $w_{ij} = (1,0)$ whenever $j < k$ holds,
    \item
    $w_{ij} =  (\alpha_j, 1)$, whenever $i < k \leq j$ holds and
    \item 
    $w_{ij} = (\alpha_i + \alpha_j -1, 2)$ whenever $k \leq i$.
\end{enumerate}
The semiample cone looks as follows:

\begin{center}
\begin{tikzpicture}[scale=0.6]
\path[fill=gray!60!] (0,0)--(9,1.5)--(9,0);
\draw[-,thick] (0,0)--(9,1.5);
\draw[-,thick] (0,0)--(9,0);
\path[fill, color=black] (1,0) circle (0.5ex);
\path[fill, color=black] (2,0) circle (0.0ex)
node[below]{\small $w_{ij}$ for $j < k$};
\path[fill, color=black] (6,1) circle (0.5ex) 
node[above]{\small $(\alpha_n,1)$};
\path[fill, color=black] (0,1) circle (0.5ex)
node[left]{\small $(\alpha_k,1)$};
\path[fill, color=black] (2.5,1) circle (0.3ex);
\path[fill, color=black] (3,1) circle (0.3ex);
\path[fill, color=black] (3.5,1) circle (0.3ex);
\path[fill, color=black] (4,2) circle (0.5ex);
\path[fill, color=black] (3.6,2.5) circle (0.0ex)
node[right]{\small $w_{ij}$ for $i,j\geq k$};
\path[fill, color=black] (1.5,2) circle (0.3ex);
\path[fill, color=black] (2,2) circle (0.3ex);
\path[fill, color=black] (2.5,2) circle (0.3ex);
\path[fill, color=black] (5,2) circle (0.3ex);
\path[fill, color=black] (5.5,2) circle (0.3ex);
\path[fill, color=black] (6,2) circle (0.3ex);
\draw[-,thick] (-1,0)--(10,0);
\draw[-,thick] (0,-1)--(0,3);
\end{tikzpicture}   
\end{center}
Moreover, each of the configurations above gives a smooth full intrinsic Grassmannian of type $(2,n)$ with Picard number two.
\end{theorem}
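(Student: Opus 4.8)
The plan is to pass from the geometry of $X$ to the combinatorics of its $\Cl(X)=\ZZ^2$-grading together with its GIT chamber, using Construction~\ref{constr:intrinsGrassmannian}: giving a smooth full intrinsic Grassmannian of type $(2,n)$ with Picard number two is the same as giving a $\ZZ^2$-grading of $\mathcal{R}(X)=\KK[T_{ij}]/I_{2,n}$ for which $I_{2,n}$ is homogeneous, together with an ample class selecting the quotient. The first step is to solve the homogeneity constraint. Requiring each Plücker relation $T_{ij}T_{kl}-T_{ik}T_{jl}+T_{il}T_{jk}$ to be homogeneous forces $w_{ij}+w_{kl}=w_{ik}+w_{jl}=w_{il}+w_{jk}$ for all $i<j<k<l$. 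I would show that, for $n\ge 4$, every solution of this linear system is additive, i.e.\ $w_{ij}=a_i+a_j$ for some $a_1,\dots,a_n\in\QQ^2$: the additive solutions already exhaust the $\QQ$-solution space (the assignment $(a_i)\mapsto(a_i+a_j)$ is injective and its image has the right dimension), so integrality of the $w_{ij}$ is the only extra condition. Note that the half-integral $a_i$ already anticipate the shift appearing in regimes (i)--(iii).

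Next I would normalize this data. The group $\GL_2(\ZZ)$ acts by automorphisms of the grading group and $S_n$ permutes indices, and neither changes the isomorphism type of $X$; fullness guarantees that all $\binom{n}{2}$ variables are genuine Cox generators and that the $w_{ij}$ generate $\ZZ^2$, while being the Cox ring of a projective variety forces the weight cone $\cone(w_{ij})$ to be pointed and two-dimensional. Using $\GL_2(\ZZ)$ I would arrange the second coordinates of the $a_i$ to be weakly increasing and then argue, combining pointedness with the smoothness analysis below, that they are confined to $\{0,1\}$. This produces the splitting of $\{1,\dots,n\}$ into the blocks $\{i<k\}$ and $\{i\ge k\}$, the normalization $\alpha_k=0\le\alpha_{k+1}\le\cdots\le\alpha_n$, and the three weight regimes (i)--(iii). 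The bound $k\ge 4$ I expect to fall out of requiring the low block to carry a genuine Plücker relation together with the regularity constraints.

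The technical heart is the smoothness analysis, and I expect it to be the main obstacle. Since $\Spec\mathcal{R}(X)$ is the affine cone over the smooth Grassmannian $\Gr(2,n)$, it is smooth away from its vertex, so $X=\hat X\quot H$ with $H=(\KK^*)^2$, and smoothness of $X$ becomes regularity of the quotient. I would invoke the combinatorial smoothness criterion for such quotient presentations: $X$ is smooth if and only if for every relevant $X$-face --- equivalently, for every pattern of simultaneously non-vanishing Plücker coordinates compatible with the chosen GIT chamber --- the corresponding weights $w_{ij}$ form part of a $\ZZ$-basis of $\ZZ^2$. The admissible non-vanishing patterns are governed by the matroid/exchange structure of $\Gr(2,n)$, and running through them converts regularity into explicit numerical conditions on the $\alpha_i$. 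Tracking these conditions is what simultaneously excludes second coordinates outside $\{0,1\}$, forces the monotonicity $\alpha_k\le\cdots\le\alpha_n$ and the shift $-1$ in regime (iii), and yields $\alpha_k=0$; organizing this finite but delicate case analysis is the real work.

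Finally I would determine the semiample cone and settle the converse. The rays through the weights $w_{ij}$ subdivide the effective cone into GIT chambers; the ample class sits in one of them, and I would identify the closure of that chamber with the cone drawn in the statement by locating the extremal weights $(\alpha_k,1)$ and $(\alpha_n,1)$ and the $(1,0)$-ray. For the \emph{Moreover} direction I start from the abstract data $(k,\alpha_k,\dots,\alpha_n)$: the formulas (i)--(iii) are of additive shape $a_i+a_j$, hence make $I_{2,n}$ homogeneous by construction, the weight cone is pointed and the $w_{ij}$ generate $\ZZ^2$, so for an ample class in the interior of the displayed cone the general theory returns a normal projective $X$ with $\Cl(X)=\ZZ^2$ and Cox ring exactly $\KK[T_{ij}]/I_{2,n}$, that is, a full intrinsic Grassmannian. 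Smoothness holds because the regularity conditions of the previous step are met by design; and computing $-\canK_X$ as the adjunction-corrected sum of the $w_{ij}$ for the Gorenstein Fano cone $\Spec\mathcal{R}(X)$ shows it lands in the chamber interior, so each configuration genuinely occurs. This closes the equivalence between the geometric objects and the listed combinatorial data.
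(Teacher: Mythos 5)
Your strategy is genuinely different from the paper's, and in outline it is reasonable: you first solve the homogeneity system globally (every solution is additive, $w_{ij}=a_i+a_j$ with half‑integral $a_i$ --- this is in effect Lemma~\ref{lem:grading}, which the paper proves but uses only for the anticanonical class), then normalize by $S_n$ and $\mathrm{GL}_2(\ZZ)$, and finally extract the combinatorial constraints from the smoothness criterion. The paper instead inducts on $n$: Lemma~\ref{lem:deleteVariables} shows that deleting the index $n$ leaves a smooth full intrinsic Grassmannian of type $(2,n-1)$ with Picard number two, the base case $n=4$ is imported from the intrinsic‑quadric classification (Remark~\ref{rem:type24}), and the inductive step only has to place the new weights $w_{in}$, which homogeneity reduces to locating a single weight $w_{1n}$ and three short determinant computations. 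The induction buys a drastically smaller case analysis; your global approach would be more self‑contained (no appeal to the $n=4$ classification) if it were carried out.

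It is not carried out, and that is a genuine gap rather than a detail: everything the theorem actually asserts --- the confinement of the second coordinates of the $a_i$ to $\{0,1\}$, the bound $k\ge 4$, the normalization $\alpha_k=0\le\cdots\le\alpha_n$, the three regimes and the $-1$ shift --- is deferred to a case analysis you describe only as ``the real work.'' Moreover, that analysis is subtler than pairing every $\tau^+$ weight with every $\tau^-$ weight and demanding determinant one: by Lemma~\ref{lem:XbarFaces}~(ii) the cone $\gamma_{i_1j_1,i_2j_2}$ is an $\overline{X}$-face only when $|\{i_1,j_1,i_2,j_2\}|=3$, so disjoint index pairs impose \emph{no} condition --- this is precisely why the weights with second coordinate $2$ survive (they never share an index with a weight $w_{ij}$, $j<k$) --- and one must separately rule out minimal relevant faces of dimension $\ge 3$ (Lemma~\ref{lem:Pic2Weights}~(ii)), which requires its own argument using the shape of the Plücker relations. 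You would also need to justify $\Cl(X)=\ZZ^2$ (torsion‑freeness), which does not follow from fullness alone but from local factoriality via a relevant face. Finally, the last sentence of your converse is false as stated: $-\mathcal{K}_X$ does not in general lie in the interior of the displayed chamber --- that is exactly the Fano criterion of Corollary~\ref{cor:FanoClassification}, which most configurations fail; existence of the variety only needs \emph{some} ample class in the nonempty full‑dimensional chamber, which you already have, so the $-\mathcal{K}_X$ computation should simply be dropped from that step.
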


More generally, in Theorem~\ref{thm:classNonFull} we carry out the 
more comprehensive classification in
the non-full case. 
We give a description of their geometry in Section~\ref{sec:geometry}
and show that every smooth intrinsic Grassmannian of type $(2,n)$ with Picard number two fulfills Fujita's freeness conjecture in Corollary~\ref{cor:Fujita}.

Note that intrinsic Grassmannians of type $(2,4)$
are \emph{intrinsic quadrics,}
i.e.\ normal, projective varieties with finitely generated divisor class group and finitely generated Cox ring admitting homogeneous generators such that the 
ideal of relations is generated by a single quadric.
In the smooth case their classification has been carried out in \cite{FaHa}
for small Picard numbers.
In particular, we will use parts of their results for the proof of the above statement.

Using our explicit description of the anticanonical class of a full intrinsic Grassmannian of type $(2,n)$, see Proposition \ref{prop:AnticanClass},
we are able to characterize the Fano and truly almost Fano varieties among the 
smooth full intrinsic Grassmannians in the above theorem; see Corollary~\ref{cor:FanoNonFull} for the analogues statement in the non-full case.
Recall that a normal projective variety $X$ is called
\emph{almost Fano} if it has a numerically effective anticanonical divisor and we call $X$ \emph{truly almost Fano} if it is almost Fano but not Fano.

\begin{corollary}\label{cor:FanoClassification}
In the notation of Theorem~\ref{thm:classification}
the (truly almost) Fano varieties among the smooth
full intrinsic Grassmannians of type $(2,n)$ with Picard number two are characterized as follows:

\begin{longtable}{c|c}
Fano & truly almost Fano 
\\
\hline

\begin{minipage}[t]{0.45\textwidth}
\vspace{-3mm}
$$\sum_{i=k+1}^{n-1} \alpha_i - \frac{1}{2} n + k - 1 > (n-k)\alpha_n$$
\end{minipage}
&
\begin{minipage}[t]{0.45\textwidth}
\vspace{-3mm}
$$\sum_{i=k+1}^{n-1} \alpha_i - \frac{1}{2} n + k - 1 = (n-k)\alpha_n$$
\end{minipage}
\end{longtable}
\end{corollary}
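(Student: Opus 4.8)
The plan is to reduce the Fano / truly almost Fano dichotomy to a membership question for the anticanonical class $-K_X$ inside the semiample cone, and then to read everything off from the two results already at our disposal: the explicit anticanonical class of Proposition~\ref{prop:AnticanClass} and the shape of the semiample cone in Theorem~\ref{thm:classification}. Since $X$ is smooth and projective with finitely generated Cox ring, its semiample cone coincides with its nef cone, and the ample cone is the relative interior of the latter. Hence $X$ is Fano precisely when $-K_X$ lies in the interior of the semiample cone, it is almost Fano precisely when $-K_X$ lies in the semiample cone, and it is truly almost Fano precisely when $-K_X$ lies on the boundary.

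First I would record the two inputs. By Theorem~\ref{thm:classification} the semiample cone is $\cone((1,0),(\alpha_n,1)) = \{(x,y)\in\QQ^2 : y\geq 0,\ x \geq \alpha_n y\}$, with extremal rays $\QQ_{\geq 0}(1,0)$ and $\QQ_{\geq 0}(\alpha_n,1)$; its interior is cut out by $y>0$ and $x>\alpha_n y$. Writing $w_{ij}=b_i+b_j$ with $b_i=(\tfrac12,0)$ for $i<k$ and $b_i=(\alpha_i-\tfrac12,1)$ for $i\geq k$, Proposition~\ref{prop:AnticanClass} gives the anticanonical class explicitly, which written out in the coordinates of $\Cl(X)=\ZZ^2$ reads $-K_X=(u_1,u_2)$ with
$$ u_1 = 2\sum_{i=k}^{n}\alpha_i + 2k - n - 2, \qquad u_2 = 2(n-k+1). $$
Because $k\leq n$ we have $u_2>0$, so $-K_X$ never lies on the ray $\QQ_{\geq 0}(1,0)$; its position relative to the semiample cone is therefore governed entirely by the single wall $x=\alpha_n y$, that is, by the sign of $u_1-\alpha_n u_2$.

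It then remains to rewrite this quantity. Using $\alpha_k=0$ one has $\sum_{i=k}^{n}\alpha_i-(n-k+1)\alpha_n=\sum_{i=k+1}^{n-1}\alpha_i-(n-k)\alpha_n$, and hence
$$ u_1-\alpha_n u_2 = 2\Big(\sum_{i=k+1}^{n-1}\alpha_i - \tfrac{1}{2}n + k - 1 - (n-k)\alpha_n\Big). $$
Thus $-K_X$ lies in the interior of the semiample cone exactly when the strict inequality in the first column of the table holds, and on its boundary exactly when the two sides are equal (the case $u_1-\alpha_n u_2<0$ being the remaining, non–almost Fano, locus). By the first paragraph these are precisely the Fano and truly almost Fano conditions, which proves the statement. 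The only point that genuinely requires care is the identification of the upper extremal ray of the semiample cone as $\QQ_{\geq 0}(\alpha_n,1)$ — equivalently, that among the weights $(\alpha_j,1)$ on the line $y=1$ it is $(\alpha_n,1)$, and not $(\alpha_k,1)=(0,1)$, that spans the nef boundary; this is exactly the content of the cone picture in Theorem~\ref{thm:classification}, and once it is in hand the remaining steps are the elementary computation above.
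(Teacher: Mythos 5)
Your proposal is correct and follows essentially the same route as the paper: both compute $-\mathcal{K}_X=\bigl(2\sum_{i=k}^{n}\alpha_i+2k-n-2,\ 2(n-k+1)\bigr)$ from Proposition~\ref{prop:AnticanClass}, identify $\mathrm{SAmple}(X)=\mathrm{cone}((1,0),(\alpha_n,1))$ from Theorem~\ref{thm:classification}, and test interior versus boundary membership via the sign of $u_1-\alpha_n u_2$ (the paper obtains this as the $m=0$ specialization of its Type~1 criterion, and your $w_{ij}=b_i+b_j$ summation is just a tidier way of evaluating the same weight sum).
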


The inequality of the Fano criterion forces $\alpha_n < k - \frac{1}{2}n -1$. 
In particular, the full flag variety 
$\mathrm{V}(T_{0}S_{0} + T_1S_1 + T_2S_2) \subseteq \PP^2 \times \PP^2$
is the only smooth Fano full intrinsic Grassmannian of type $(2,4)$ with Picard number two; see also \cite{FaHa}*{Thm. 1.3}.

For further illustration we exemplarily list all smooth Fano full intrinsic Grassmannians of type $(2,n)$
for $5 \leq n \leq 8$ and in addition computed their first plurigenera $h^0(-\mathcal K_X)$ using the computer algebra system Macaulay2 \cite{M2}.

\begin{corollary}
Every smooth Fano full intrinsic Grassmannian of type $(2,n)$ with Picard number two
and $5 \leq n \leq 8$ is isomorphic to one of the varieties $X$ in the list below,
specified by their Cox ring 
$$\mathcal{R}(X) = \KK[T_{ij}; \ 1 \leq i < j\leq n ]/I_{2,n},$$
 and the anticanonical class $-\mathcal{K}_X \in \mathrm{Cl}(X).$ 
 In all cases $\mathrm{Cl}(X) \cong \ZZ^2$ 
and the grading is fixed by the matrix
$[w_{12}, w_{13}, \ldots, w_{(n-1)n}]$
of lexicographically ordered generator degrees 
$\mathrm{deg}(T_{ij})$.

\vspace{3mm}
{ \setlength{\arraycolsep}{1.8pt}
\renewcommand*{\arraystretch}{1}
\begin{longtable}{|Sc||Sc|Sc|Sc|Sc|}
\hline\evnrow{\small
No.}
&
{\small
$n$}
&
{\small
$[w_{12}, w_{13}, \ldots,w_{1n}, w_{23}, \ldots, w_{(n-1)n}]$}
&
{\small
$- \mathcal{K}_X$ }
&
{\small
$h^0(- \mathcal{K}_X)$ }
\\
\hline
 1&
$5$
&
{\tiny
$\left[\begin{array}{cccccccccc}
1 & 1 & 1 & 0 & 1 & 1 & 0 & 1 & 0 & 0
\\
0 & 0 & 0 & 1 & 0 & 0 & 1 & 0 & 1 & 1
\end{array}\right]$
}
&
{\tiny
$
\left[
\begin{array}{c}
3\\
2
\end{array}
\right]
$
}& {\small$280$}
\\
\hline
\evnrow 2&
$5$
&
{\tiny
$\left[\begin{array}{cccccccccc}
1 & 1 & 0 & 0 & 1 & 0 & 0 & 0 & 0 & -1
\\
0 & 0 & 1 & 1 & 0 & 1 & 1 & 1 & 1 & 2
\end{array}\right]$
}
&
{\tiny
$
\left[
\begin{array}{c}
1\\
4
\end{array}
\right]
$
}&\small{$266$}
\\
\hline
\hline
3&
$6$
&
{\tiny
$\left[\begin{array}{ccccccccccccccc}
1 & 1 & 1 & 1 & 0 & 1 & 1 & 1 & 0 & 1 & 1 & 0 & 1 & 0 & 0
\\
0 & 0 & 0 & 0 & 1 & 0 & 0 & 0 & 1 & 0 & 0 & 1 & 0 & 1 & 1
\end{array}\right]$
}
&
{\tiny
$
\left[
\begin{array}{c}
4\\
2
\end{array}
\right]
$
}&\small{$3750$}
\\
\hline
\evnrow4&
$6$
&
{\tiny
$\left[\begin{array}{ccccccccccccccc}
1 & 1 & 1 & 0 & 0 & 1 & 1 & 0 & 0 & 1 & 0 & 0 & 0 & 0 & -1
\\
0 & 0 & 0 & 1 & 1 & 0 & 0 & 1 & 1 & 0 & 1 & 1 & 1 & 1 & 2
\end{array}\right]$
}
&
{\tiny
$
\left[
\begin{array}{c}
2\\
4
\end{array}
\right]
$
}&\small{$2745$}
\\
\hline
\hline
5&
$7$
&
{\tiny
$\left[\begin{array}{ccccccccccccccccccccc}
1 & 1 & 1 & 1 & 1 & 0 & 1 & 1 & 1 & 1 & 0 & 1 & 1 & 1 & 0 & 1 & 1 & 0 & 1 & 0 & 0
\\
0 & 0 & 0 & 0 & 0 & 1 & 0 & 0 & 0 & 0 & 1 & 0 & 0 & 0 & 1 & 0 & 0 & 1 & 0 & 1 & 1
\end{array}\right]$
}
&
{\tiny
$
\left[
\begin{array}{c}
5\\
2
\end{array}
\right]
$
}&\small{$37422$}
\\
\hline
\evnrow6&
$7$
&
{\tiny
$\left[\begin{array}{ccccccccccccccccccccc}
1 & 1 & 1 & 1 & 0 & 0 & 1 & 1 & 1 & 0 & 0 & 1 & 1 & 0 & 0 & 1 & 0 & 0 & 0 & 0 & -1
\\
0 & 0 & 0 & 0 & 1 & 1 & 0 & 0 & 0 & 1 & 1 & 0 & 0 & 1 & 1 & 0 & 1 & 1 & 1 & 1 & 2
\end{array}\right]$
}
&
{\tiny
$
\left[
\begin{array}{c}
3\\
4
\end{array}
\right]
$
}&\small{$31251$}
\\
\hline
7&
$7$
&
{\tiny
$\left[\begin{array}{ccccccccccccccccccccc}
1 & 1 & 1 & 1 & 0 & 1 & 1 & 1 & 1 & 0 & 1 & 1 & 1 & 0 & 1 & 1 & 0 & 1 & 0 & 1 & 0
\\
0 & 0 & 0 & 0 & 1 & 1 & 0 & 0 & 0 & 1 & 1 & 0 & 0 & 1 & 1 & 0 & 1 & 1 & 1 & 1 & 2
\end{array}\right]$
}
&
{\tiny
$
\left[
\begin{array}{c}
5\\
4
\end{array}
\right]
$
}&\small{$48206$}
\\
\hline
\evnrow8&
$7$
&
{\tiny
$\left[\begin{array}{ccccccccccccccccccccc}
1 & 1 & 1 & 0 & 0 & 0 & 1 & 1 & 0 & 0 & 0 & 1 & 0 & 0 & 0 & 0 & 0 & 0 & -1 & -1 & -1
\\
0 & 0 & 0 & 1 & 1 & 1 & 0 & 0 & 1 & 1 & 1 & 0 & 1 & 1 & 1 & 1 & 1 & 1 & 2 & 2 & 2
\end{array}\right]$
}
&
{\tiny
$
\left[
\begin{array}{c}
1\\
6
\end{array}
\right]
$
}&\small{$30030$}
\\
\hline
\hline
9&
$8$
&
{\tiny
$\left[\begin{array}{cccccccccccccccccccccccccccc}
1 & 1 & 1 & 1 & 1 & 1 & 0 & 1 & 1 & 1 & 1 & 1 & 0 & 1 & 1 & 1 & 1 & 0 & 1 & 1 & 1 & 0 & 1 & 1 & 0 & 1 & 0 & 0
\\
0 & 0 & 0 & 0 & 0 & 0 & 1 & 0 & 0 & 0 & 0 & 0 & 1 & 0 & 0 & 0 & 0 & 1 & 0 & 0 & 0 & 1 & 0 & 0 & 1 & 0 & 1 & 1
\end{array}\right]$
}
&
{\tiny
$
\left[
\begin{array}{c}
6\\
2
\end{array}
\right]
$
}&\small{$462462$}
\\
\hline
\evnrow10&
$8$
&
{\tiny
$\left[\begin{array}{cccccccccccccccccccccccccccc}
1 & 1 & 1 & 1 & 1 & 0 & 0 & 1 & 1 & 1 & 1 & 0 & 0 & 1 & 1 & 1 & 0 & 0 & 1 & 1 & 0 & 0 & 1 & 0 & 0 & 0 & 0 & -1
\\
0 & 0 & 0 & 0 & 0 & 1 & 1 & 0 & 0 & 0 & 0 & 1 & 1 & 0 & 0 & 0 & 1 & 1 & 0 & 0 & 1 & 1 & 0 & 1 & 1 & 1 & 1 & 2
\end{array}\right]$
}
&
{\tiny
$
\left[
\begin{array}{c}
4\\
4
\end{array}
\right]
$
}&\small{$376376$}
\\
\hline
11&
$8$
&
{\tiny
$\left[\begin{array}{cccccccccccccccccccccccccccc}
1 & 1 & 1 & 1 & 1 & 0 & 1 & 1 & 1 & 1 & 1 & 0 & 1 & 1 & 1 & 1 & 0 & 1 & 1 & 1 & 0 & 1 & 1 & 0 & 1 & 0 & 1 & 0
\\
0 & 0 & 0 & 0 & 0 & 1 & 1 & 0 & 0 & 0 & 0 & 1 & 1 & 0 & 0 & 0 & 1 & 1 & 0 & 0 & 1 & 1 & 0 & 1 & 1 & 1 & 1 & 2
\end{array}\right]$
}
&
{\tiny
$
\left[
\begin{array}{c}
6\\
4
\end{array}
\right]
$
}&\small{$640333$}
\\
\hline
\evnrow12&
$8$
&
{\tiny
$\left[\begin{array}{cccccccccccccccccccccccccccc}
1 & 1 & 1 & 1 & 0 & 0 & 0 & 1 & 1 & 1 & 0 & 0 & 0 & 1 & 1 & 0 & 0 & 0 & 1 & 0 & 0 & 0 & 0 & 0 & 0 & -1 & -1 & -1
\\
0 & 0 & 0 & 0 & 1 & 1 & 1 & 0 & 0 & 0 & 1 & 1 & 1 & 0 & 0 & 1 & 1 & 1 & 0 & 1 & 1 & 1 & 1 & 1 & 1 & 2 & 2 & 2
\end{array}\right]$
}
&
{\tiny
$
\left[
\begin{array}{c}
2\\
6
\end{array}
\right]
$
}&\small{$348985$}
\\
\hline

\end{longtable}}

\end{corollary}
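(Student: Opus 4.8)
The plan is to combine the classification of Theorem~\ref{thm:classification}, the Fano criterion of Corollary~\ref{cor:FanoClassification}, and the anticanonical formula of Proposition~\ref{prop:AnticanClass} with a finite search. By Theorem~\ref{thm:classification}, every smooth full intrinsic Grassmannian of type $(2,n)$ with Picard number two is encoded, up to isomorphism, by an integer $k$ with $4 \leq k \leq n$ together with a non-decreasing sequence $0 = \alpha_k \leq \alpha_{k+1} \leq \cdots \leq \alpha_n$, the $\Cl(X) = \ZZ^2$-grading being recovered from formulas (i)--(iii). I treat this normalized data as a complete invariant, so that distinct tuples $(k,\alpha)$ yield pairwise non-isomorphic varieties and it suffices to enumerate the admissible tuples.

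The finiteness input is the remark following Corollary~\ref{cor:FanoClassification}: the Fano inequality $\sum_{i=k+1}^{n-1}\alpha_i - \tfrac{1}{2}n + k - 1 > (n-k)\alpha_n$ together with $\alpha_i \leq \alpha_n$ forces $\alpha_n < k - \tfrac{1}{2}n - 1$, whence in particular $k > \tfrac{1}{2}n + 1$. Since $0 \leq \alpha_i \leq \alpha_n$ for every $i$, these two bounds cut the admissible data down to a finite list for each fixed $n$: first $k$ ranges over the integers with $\tfrac{1}{2}n + 1 < k \leq n$, and then each entry $\alpha_{k+1}, \ldots, \alpha_n$ ranges over the non-negative integers subject to monotonicity and to the upper bound imposed on $\alpha_n$.

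First I would run this search for $n = 5,6,7,8$. The bound $\alpha_n < k - \tfrac{1}{2}n - 1$ is only necessary, so each surviving candidate must still be tested against the full inequality of Corollary~\ref{cor:FanoClassification}, guaranteeing that no Fano configuration is discarded prematurely. The outcome is exactly twelve tuples: for $n=5$ the data $(k,\alpha)=(5,(0))$ and $(4,(0,0))$; for $n=6$ the data $(6,(0))$ and $(5,(0,0))$; for $n=7$ the data $(7,(0))$, $(6,(0,0))$, $(6,(0,1))$ and $(5,(0,0,0))$; and for $n=8$ the data $(8,(0))$, $(7,(0,0))$, $(7,(0,1))$ and $(6,(0,0,0))$. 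For each tuple the lexicographically ordered grading matrix $[w_{12}, \ldots, w_{(n-1)n}]$ is read off directly from formulas (i)--(iii), reproducing the third column of the table, and Proposition~\ref{prop:AnticanClass} produces the anticanonical class $-\mathcal{K}_X$ in the fourth column.

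The remaining, genuinely computational step is the last column $h^0(-\mathcal{K}_X) = \dim_\KK \mathcal{R}(X)_{-\mathcal{K}_X}$, the dimension of the graded piece of the Plücker quotient $\KK[T_{ij}]/I_{2,n}$ in the bidegree $-\mathcal{K}_X$. I expect this multigraded Hilbert-function evaluation to be the main obstacle in practice: although the relevant degrees are small, the Hilbert function of $I_{2,n}$ must be computed honestly, and for $n=7,8$ this already involves sizeable ideals, which is where the Macaulay2 computation \cite{M2} enters. A minor bookkeeping point to dispatch is that the twelve tuples do give pairwise non-isomorphic varieties: those with different $n$ have Cox rings of different Krull dimension, while within a fixed $n$ the normalized data of Theorem~\ref{thm:classification} already distinguishes them.
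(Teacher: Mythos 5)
Your proposal is correct and follows exactly the route the paper intends for this corollary (which it states without a written proof): enumerate the admissible data $(k,\alpha_k,\ldots,\alpha_n)$ from Theorem~\ref{thm:classification} using the Fano inequality of Corollary~\ref{cor:FanoClassification}, read off the grading matrices and $-\mathcal{K}_X$ via Proposition~\ref{prop:AnticanClass}, and compute $h^0(-\mathcal{K}_X)$ with Macaulay2; your twelve tuples match the table. The only point you gloss over, pairwise non-isomorphy of the normalized data, is exactly what the paper establishes separately in the proof of Corollary~\ref{cor:NumberOfFanos}.
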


As a direct consequence of the Fano criterion given in Corollary~\ref{cor:FanoClassification}, we obtain the following explicit formula for the number of smooth Fano full intrinsic Grassmannians of type $(2,n)$ 
for arbitrary $n$.

\begin{corollary}\label{cor:NumberOfFanos}
The number of pairwise non-isomorphic smooth Fano full intrinsic Grassmannians of type $(2,n)$ that have Picard number two is given via the following formula:
$$
\sum_{i = \lfloor\frac{n}{2}\rfloor+ 2}^n 
\quad
\sum_{j= 0}^{i-\lfloor\frac{n}{2}\rfloor -2} 
\quad
\sum_{k = 0}^{i - \lfloor\frac{n}{2}\rfloor-2 - j}
a(j(n-i-1)-k, n-i-1,j),
$$
where
$a(x,y,z)$
is the number of all integer sequences
$
0 \leq a_1 \leq \ldots \leq a_y \leq z
$
with ${a_1 + \ldots + a_y = x}$. 
\end{corollary}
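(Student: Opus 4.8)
The plan is to reduce the enumeration to a purely combinatorial counting problem governed by the Fano inequality of Corollary~\ref{cor:FanoClassification}, and then to rearrange this count into the stated triple sum. By Theorem~\ref{thm:classification} the smooth full intrinsic Grassmannians of type $(2,n)$ with Picard number two are in bijection with the admissible data $(k,(\alpha_i))$, where $4\le k\le n$ and $0=\alpha_k\le\alpha_{k+1}\le\cdots\le\alpha_n$; distinct data yield non-isomorphic varieties, since the normalized grading matrix of Theorem~\ref{thm:classification} is a complete invariant. Restricting to the Fano locus via Corollary~\ref{cor:FanoClassification}, the task becomes to count the admissible data satisfying
$$\sum_{m=k+1}^{n-1}\alpha_m-\tfrac12 n+k-1>(n-k)\alpha_n.$$
First I would record the numerical constraint this forces. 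Bounding the middle sum by $(n-1-k)\alpha_n$, one sees $\alpha_n<k-\tfrac12 n-1$, so in particular $k\ge\lfloor n/2\rfloor+2$; this is exactly the range $i=k$ of the outer summation.

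Next I would fix $k$ and the top value $j:=\alpha_n$, and reparametrize the remaining entries by their deficit from the maximum. Writing $y:=n-k-1$ for the number of middle entries $\alpha_{k+1}\le\cdots\le\alpha_{n-1}\le j$ and setting $b_t:=j-\alpha_{n-t}$ (so that $0\le b_1\le\cdots\le b_y\le j$ with $\sum_t b_t=jy-\sum_{m=k+1}^{n-1}\alpha_m$), the Fano inequality transforms into a clean bound on the total deficit $D:=\sum_t b_t$, namely $D<k-j-\tfrac12 n-1$. A short parity check, separating the cases $n$ even and $n$ odd, shows that for integer $D$ this is equivalent to $0\le D\le k-j-\lfloor n/2\rfloor-2$; the same computation pins down the range $0\le j\le k-\lfloor n/2\rfloor-2$ of admissible top values. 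These are precisely the ranges of the two inner summation indices.

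For fixed $k$, $j$ and $D$ the number of admissible middle parts is, by definition, the number of sequences $0\le b_1\le\cdots\le b_y\le j$ with $\sum_t b_t=D$, which is $a(D,y,j)$; by the complementation symmetry $a(D,y,j)=a(jy-D,y,j)$ this equals the summand $a(j(n-k-1)-D,\,n-k-1,\,j)$ of the formula. Summing over all admissible $D$, then over $j$, then over $k$ (renamed $i$ in the statement) assembles the triple sum and identifies its inner index with $D$. The one point requiring care is the boundary case $k=n$: here $\alpha_n=0$ is forced, there are no middle entries and $y=n-k-1=-1$; under the natural convention that $a(x,-1,z)$ equals $1$ for $x=0$ and $0$ otherwise, the $i=n$ term contributes exactly the single standard variety, as it must.

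I expect the main obstacle to be twofold: making the passage from the strict inequality with the half-integer term $\tfrac12 n$ to the exact integer bounds completely rigorous across both parities of $n$, and justifying that the parametrization of Theorem~\ref{thm:classification} is genuinely one-to-one on isomorphism classes, so that no variety is counted twice. Once these are settled, the remaining step is the bookkeeping that matches the three summation ranges and the complemented argument of $a$, which is routine.
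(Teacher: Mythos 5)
Your combinatorial reduction is exactly the paper's: bound $\alpha_n$ to force $k\ge\lfloor n/2\rfloor+2$, fix $k$ and $\alpha_n$, rewrite the strict Fano inequality as the integer bound $\sum_{m=k+1}^{n-1}(\alpha_n-\alpha_m)\le k-\lfloor n/2\rfloor-2-\alpha_n$ (your parity check is the right way to make this rigorous), and count sequences for each fixed total deficit $D$. Your detour through the deficits $b_t$ and the symmetry $a(D,y,j)=a(jy-D,y,j)$ is harmless --- the paper simply counts the sequences $\alpha_{k+1}\le\cdots\le\alpha_{n-1}$ directly, which is why the summand already appears in the form $a(j(n-i-1)-k,\,n-i-1,\,j)$. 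Your handling of the degenerate cases $y\in\{0,-1\}$ via the empty-sequence convention is a point the paper leaves implicit, and it checks out.

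The genuine gap is the non-isomorphism claim. You assert that ``the normalized grading matrix of Theorem~\ref{thm:classification} is a complete invariant'' and then list the injectivity of the parametrization as an obstacle you have not resolved --- but this is precisely what must be proved, and Theorem~\ref{thm:classification} does not give it: it only says every such $X$ admits \emph{some} grading of the stated form, not that two different data $(k,(\alpha_j))$ and $(k',(\alpha'_j))$ cannot produce isomorphic varieties via a nontrivial automorphism of $\ZZ^2$ combined with a permutation of the Pl\"ucker coordinates. The paper closes this with a separate argument occupying half the proof: any isomorphism $X\cong X'$ induces a graded isomorphism $(\varphi,\psi)$ of Cox rings in which $\psi\in\mathrm{GL}_2(\ZZ)$ must map $\Eff$, $\Mov$ and $\SAmple$ of $X$ onto those of $X'$. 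The case $k=n$ is isolated first using the set $\Omega_R$ of generator degrees (here $|\Omega_R|=2$ forces $k'=n$); for $k<n$ the cone conditions force $\psi=\mathrm{id}$, after which $\dim R_{(1,0)}=\dim R'_{(1,0)}$ pins down $k=k'$, and a comparison of $\dim R_{(\alpha_j,1)}$ with $\dim R'_{(\alpha_j,1)}$ at the first index where the sequences would differ yields a contradiction. Without some argument of this kind your count only bounds the number of isomorphism classes from above, so the proposal as written does not establish the stated equality.
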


\subsection*{Context and further outlook}
The construction of Fano varieties from well understood classes of algebraic varieties plays a pivotal role in their classification. 
In particular, the Grassmannians and  homogeneous varieties feature in various veins in this context:
The construction of families of smooth  Fano 4-folds as sections of  homogeneous vector bundles over Grassmannians \cite{Kuchle-95},  and  Mukai's linear section theorem \cite{mukai4} showing every prime Fano 3-fold of genus \(7\le g \le 10\)  is a linear section of a homogeneous variety,  are two such instances.
Moreover, in \cite{wg} Corti and Reid  introduced the notion of weighted  Grassmannians $\wGr(d,n)$, to apply the Mukai's linear section theorem in the weighted case, i.e.\ to hunt  interesting classes of algebraic varieties as  complete intersections of $\wGr(d,n)$. They showed that   69 out of 70   families of terminal Fano 3-folds in codimension 3 can be constructed  as complete intersections of  some $\w\Gr(2,5)$ or projective cone over it. This idea has been subsequently used in \cites{QS,QS-AHEP,QJSC,QJGP,QS2,BKZ}  for the construction of Calabi--Yau and canonical 3-folds as quasilinear sections of weighted homogeneous varieties of various types.
We believe these ideas can be applied in the setting of Picard rank greater or equal to two to construct new classes of Fano manifolds as hypersurfaces and complete intersections of intrinsic Grassmannians. The work of Hausen, Laface and Mauz \cite{HLM} can serve as a model in the hypersurface case.

On the other hand, over the last few years the approach of classifying varieties with a prescribed Cox ring has 
been effectively employed by various authors:
Here, we would like to mention the work 
on smooth and mildly singular Fano varieties with torus action of complexity one \cites{ FaHaNi, BeHaHuNi}
and their generalization to the so called arrangement varieties \cites{HaHiWr,HiWr1, HiWr2}
comprising the intrinsic quadrics treated in \cites{FaHa,Hi}.
Working in this spirit, the next logical steps would be the generalization to  higher Picard numbers and
the treatment of intrinsic Grassmannians of type $(d,n)$ with $d \geq 3$ or entering the field of mildly singular Fano intrinsic Grassmannians.

\subsection*{Acknowledgement}
 This project was incepted when MIQ was at the Universitat  T\"ubingen on a fellowship of the Alexander von Humboldt foundation. We would like to thank J\"urgen Hausen, Christoff Hische and Miles Reid for helpful discussions. MIQ was supported by a faculty startup research grant No.\ SR191006 of the Deanship of Scientific Research at the King Fahd University of
Petroleum and Minerals during this project.

\tableofcontents

\section{Generalities on Intrinsic Grassmannians}
Throughout the whole article $\KK$ is an algebraically closed field of characteristic zero.
Let $X$ be a Mori dream space, i.e.\ a normal, projective
variety with finitely generated divisor class group and
finitely generated Cox ring, see \cite{ArDeHaLa}:
$$
\mathcal{R}(X) := \bigoplus_{\mathrm{Cl}(X)} \Gamma(X, \mathcal{O}_X(D)).
$$
Then we can regain $X$ from $\mathcal{R}(X)$ as follows:
The $\mathrm{Cl}(X)$-grading on $\mathcal{R}(X)$ gives rise to an action of the quasitorus 
$H:= \mathrm{Spec}(\KK[\mathrm{Cl}(X)])$
on the \emph{total coordinate space}
$\overline{X} := \mathrm{Spec}(\mathcal{R}(X))$.
Let $u \in \mathrm{Cl}(X)$ be an ample class
and consider the associated $H$-invariant open subset of \emph{semistable points}
$$
\overline{X}^\mathrm{ss}(u) 
:=
\left\{x \in \overline{X}; \ f(x) \neq 0 \text{ for some } 
f \in \mathcal{R}(X)_{nu}, \ n > 0\right\} \subseteq \overline{X}
$$
Then $X$ equals the geometric invariant theory quotient
$\overline{X}^\mathrm{ss}(u) / \!\! /H$.

In this article we will take
the opposite viewpoint:
We are interested in finding all Mori dream spaces having the homogeneous coordinate ring of the Plücker embedded Grassmannian $\mathrm{Gr}(d,n)$
as prescribed Cox ring, where we endow the ring with a suitable grading. 
More precisely we have the following construction:

\begin{construction}[Intrinsic Grassmannians of type $(d,n)$]
\label{constr:intrinsGrassmannianDN}
Fix integers $n > d \geq 2$ and $m \geq 0$ and consider the polynomial ring
$$
\KK[T_{i_1, \ldots, i_d}, S_l; \ 1 \leq i_1 < \ldots < i_d \leq n, 1 \leq l \leq m]
$$
We will denote this ring with $\KK[T_{i_1, \ldots, i_d}, S_l]$ for short in the following.
Let $I_{d,n}$ denote the ideal of
Plücker relations regarded in the above polynomial ring and consider the factor ring
$$
R:=\KK[T_{i_1, \ldots, i_d}, S_l]/I_{d,n}. 
$$
Let $K$ be a finitely generated abelian group and fix any $K$-grading on $\KK[T_{i_1, \ldots, i_d}, S_l]$ such 
that the variables $T_{i_1, \ldots, i_d}$ and $S_l$ as
well as the Plücker relations are homogeneous. 
We assume the grading to 
fulfill the following conditions:
The grading must be \emph{pointed}, i.e.\, we have $R_0= \KK$ and the \emph{effective cone}
$$
\mathrm{cone}(w_{i_1, \ldots, i_d}, w_l; \ 1 \leq i_1 < \ldots < i_d \leq n, 1 \leq l \leq m)
$$
contains no lines.
Furthermore the grading has to be \emph{almost free}, i.e.\, any 
$m + \binom{n}{d} -1$ of the weights
$w_{i_1, \ldots, i_d}$ and $w_l$ generate $K$ as a group, and the \emph{moving cone}
$$
\bigcap_{\gamma_0 \preceq \gamma \text{ facet }} \mathrm{cone}(w_{i_1, \ldots, i_d}, w_l; \ 
e_{i_1, \ldots, i_d}, e_l \in \gamma_0)
\quad
\text{where}
\quad
\gamma := \QQ_{\geq 0}^{\binom{n}{d} + m}
$$
must be full-dimensional. 
Let $u$ be any element in the 
relative interior of the moving cone. 
Then we obtain a commutative diagram
$$
\xymatrix{
\mathrm{Spec}\, R
\ar@{}[r]|{\qquad =}
&
{\bar{X}}
\ar[r]
\ar@{}[d]|{\rotatebox[origin=c]{90}{$\scriptstyle\subseteq$}}
&
{\bar{Z}}
\ar@{}[r]|{= \quad}
\ar@{}[d]|{\rotatebox[origin=c]{90}{$\scriptstyle\subseteq$}}
&
{\KK^{\binom{n}{d} +m}}
\\
&
{\overline{X}^{ss}(u)} 
\ar[r]
\ar[d]_{/ \! \! / H}
& 
{\overline{Z}^{ss}(u)}
\ar[d]^{/ \! \! / H}
&
\\
&
X
\ar[r]
&
Z
}
$$ 
where $H := \Spec\, \KK[K]$ is a quasitorus, the vertical arrows are GIT quotients corresponding to $u$ and the horizontal arrows are closed embeddings.
The varieties $X$ and $Z$ are normal projective varieties and we have
$$
\mathrm{Cl}(X) = \mathrm{Cl}(Z) \cong K
\quad 
\text{and}
\quad  
\mathcal{R}(X) = R,
$$
for the divisor class group and the Cox ring of $X$.
We call $X$ an \emph{intrinsic Grassmannian of type $(d,n)$}
and if  $m = 0$  we call $X$ a \emph{full intrinsic Grassmannian of type $(d,n)$}.
\end{construction}
\begin{proof}[Proof of Construction \ref{constr:intrinsGrassmannian}]
According to \cite{ArDeHaLa}*{Cor. 1.6.4.4} it suffices to prove that the 
$K$-grading installed on $R$ is factorial, meaning that we have unique factorization in the monoid of non-zero homogeneous elements $R^\times$ and that the variables $T_{i_1, \ldots, i_d}$ and $S_l$ are $K$-prime, i.e.\ prime in $R^\times$. This indeed holds as $R$ is a UFD and the variables are even prime. 
\end{proof}

\begin{remark}
In the notation of Construction \ref{constr:intrinsGrassmannianDN}
the variety $Z$ is a toric variety of dimension
$\mathrm{dim}(Z) = \binom{n}{d} + m - \mathrm{dim}(K_\QQ)$.
The dimension of $X$ is given as
$$
\mathrm{dim}(X) =  d(n-d)  + m - \mathrm{dim}(K_\QQ) + 1.
$$
\end{remark}

From now on we will restrict ourselves to the case of intrinsic Grassmannians of type $(2,n)$. Let us fix the notation:

\begin{notation}[Intrinsic Grassmannians of type $(2,n)$]
\label{constr:intrinsGrassmannian}
Fix integers $n \geq 4$ and $m \geq 0$.
As above we set
$$
\KK[T_{ij}, S_l]:= \KK[T_{ij}, S_l;\ 1 \leq i < j \leq n, 1 \leq l \leq m].
$$
for short. 
For $I = \left\{a,b,c,d\right\}$ with
$1 \leq a < b < c < d \leq n$, we  set
$$
g_I := 
T_{ab} T_{cd} - T_{ac}T_{bd} + T_{ad}T_{bc}
,$$
and denote the factor ring with
$$
R(n,m):=\KK[T_{ij}, S_l]/I_{2,n}, 
\quad
\text{where}
\quad
I_{2,n} =\bangle{g_I; \  I \subseteq \left\{1, \ldots, n\right\}, |I| = 4}.
$$
Now, fix any $K$-grading
on $\KK[T_{ij}, S_l]$ 
as in Construction ~\ref{constr:intrinsGrassmannianDN}. We~set 
$$
Q:= [w_{12},w_{13}, \ldots, w_{(n-1)n}, w_1, \ldots, w_m]
$$
for the matrix whose columns are the lexicographically ordered generator degrees 
$w_{ij} := \deg(T_{ij}) \in K$
followed by the generator degrees $w_l := \deg(S_l) \in K$
and denote
the ring $R(n,m)$ endowed with the above grading
with $R(n,m,Q)$. 
Note that in this case any grading that leaves the variables and a generating 
set of $I_{2,n}$ homogeneous, automatically leaves the relations $g_I$ homogeneous.
\end{notation}

The realization of an intrinsic Grassmannian $X$ of type $(2,n)$ as a geometric invariant theory quotient enables us to give the following explicit formula for the anticanonical class of $X$:

\begin{proposition}
\label{prop:AnticanClass}
Let $X$ be an intrinsic Grassmannian of type $(2,n)$
specified by its $K$-graded Cox ring $R(n,m,Q)$
and an ample class $u \in K = \Cl(X)$.
Then the anticanonical class is given as \begin{equation}\label{eq:mg_can}- \mathcal{K}_X
=\left(\frac{2}{n-1}\right)\sum_{1\leq i < j \leq n} w_{ij} + \sum_{i=1}^m w_l\in K = \mathrm{Cl}(X)\end{equation}
\end{proposition}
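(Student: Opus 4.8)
The plan is to read off $-\mathcal{K}_X$ from the total coordinate space $\overline{X}=\Spec R(n,m)$, exploiting that Construction~\ref{constr:intrinsGrassmannianDN} presents $X$ as a GIT quotient of $\overline{X}$ by the quasitorus $H=\Spec\KK[K]$. Since the Plücker ideal $I_{2,n}$ involves no $S_l$, we have $R(n,m)=A\otimes_\KK\KK[S_1,\dots,S_m]$ with $A:=\KK[T_{ij}]/I_{2,n}$ the coordinate ring of the affine cone over $\mathrm{Gr}(2,n)$; as $\mathrm{Gr}(2,n)$ is arithmetically Gorenstein and $\KK[S_l]$ is polynomial, $\overline{X}$ is Gorenstein with trivial divisor class group. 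Hence $\omega_{\overline{X}}$ is an $H$-equivariant free $R(n,m)$-module of rank one, generated by a homogeneous canonical form $\Theta$, and the general principle behind the quotient presentation (cf.\ \cite{ArDeHaLa}) identifies $-\mathcal{K}_X\in K=\Cl(X)$ with the $H$-weight of $\Theta$. Writing $\Theta=\Omega\wedge dS_1\wedge\cdots\wedge dS_m$, the factors $dS_l$ contribute $\sum_{l=1}^m w_l$, so everything reduces to the $K$-weight $\nu$ of a generating canonical form $\Omega$ of the cone $\Spec A$, and it remains to prove $\nu=\tfrac{2}{n-1}\sum_{i<j}w_{ij}$.

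First I would record a normal form for the grading. Homogeneity of every Plücker relation $g_{\{a,b,c,d\}}$ means $w_{ab}+w_{cd}=w_{ac}+w_{bd}=w_{ad}+w_{bc}$ for all four-element subsets; over $K_\QQ$ this is exactly the solvability of $w_{ij}=u_i+u_j$, with $u_i:=\tfrac12(w_{ij}+w_{ik}-w_{jk})$ for any $j,k\neq i$ (well-definedness of $u_i$ is one of the above relations, using $n\geq 4$). Consequently $\sum_{i<j}w_{ij}=(n-1)\sum_i u_i$, so the asserted value $\tfrac{2}{n-1}\sum_{i<j}w_{ij}$ equals $2\sum_i u_i$; this is the target for $\nu$.

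For the core computation I would work in the affine chart $\{T_{12}\neq 0\}$ of the cone, with radial coordinate $r:=T_{12}$ and $a_j:=T_{2j}/T_{12}$, $b_j:=T_{1j}/T_{12}$ for $3\leq j\leq n$ as local parameters on the smooth locus. There the canonical sheaf is generated by $dr\wedge\bigwedge_{j=3}^n(da_j\wedge db_j)$, whose $K$-weight is computed directly from $\deg r=w_{12}$, $\deg a_j=w_{2j}-w_{12}$, $\deg b_j=w_{1j}-w_{12}$. The generator $\Omega$ differs from this form by a homogeneous unit of the localization $A[T_{12}^{-1}]$; since the degree-zero part of this localization is the polynomial ring $\KK[a_j,b_j]$, whose units are scalars, every homogeneous unit is a scalar multiple of a power $T_{12}^{k}$. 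The power is pinned by the classical value $-\mathcal{K}_{\mathrm{Gr}(2,n)}=n\cdot(\text{Pl\"ucker class})$, equivalently the Gorenstein $a$-invariant of $A$ is $-n$: under the coarsening $K_\QQ\to\QQ,\ u_i\mapsto\tfrac12$, the generator $\Omega$ must sit in total Plücker degree $n$, and since $a_j,b_j$ have Plücker degree zero while $r$ has degree one this forces $k=n-1$, i.e.\ $\Omega\in\KK^*\cdot T_{12}^{\,n-1}\,dr\wedge\bigwedge_{j=3}^n(da_j\wedge db_j)$. Substituting $w_{ij}=u_i+u_j$ then collapses the resulting weight to $2\sum_i u_i$, independently of the chart, which is the required value of $\nu$.

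The main obstacle is precisely this last pinning step: because $I_{2,n}$ is not a complete intersection, one cannot obtain $\nu$ from a naive adjunction count of relation degrees, and the only delicate point is to exclude a spurious degree-zero correcting unit of nonzero $K$-weight. This is exactly what the structure of the homogeneous units of $A[T_{12}^{-1}]$ rules out, so that the single scalar input ``$a$-invariant $=-n$'' suffices to determine $\nu$ completely; the symmetry of the final answer $2\sum_i u_i$ across all charts $\{T_{ab}\neq 0\}$ furnishes an independent consistency check.
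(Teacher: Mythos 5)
Your argument is correct, but it reaches the formula by a genuinely different route than the paper. The paper realizes the $H$-action as a sub-quasitorus of $\mathrm{GL}(n;\KK)\times\KK^*$ (its Lemma~\ref{lem:grading}, which is the same normal form $w_{ij}=w_i+w_j+u$ that you derive as $w_{ij}=u_i+u_j$ over $K_\QQ$) and then reads the equivariant dualizing module off the last term $\mathcal{F}_c=\mathcal{O}\otimes D^{n-3}\otimes L^{\binom{n-3}{2}}$ of the explicit $\mathrm{GL}(n;\KK)\times\KK^*$-equivariant free resolution of $\mathcal{O}_{\mathrm{aGr}(2,n)}$ from Corti--Reid and Weyman. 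You instead avoid the resolution entirely: you use that the Plücker cone is factorial and Gorenstein, so $\omega_{\overline{X}}$ is free with a $K$-homogeneous generator whose weight is $-\mathcal{K}_X$, compute that weight in the chart $\{T_{12}\neq 0\}$ up to a power of $T_{12}$ (correctly observing that the homogeneous units of $A[T_{12}^{-1}]=\KK[r^{\pm1},a_j,b_j]$ are only $\KK^*\cdot r^{\ZZ}$), and pin the power by the single scalar input $a(A)=-n$, i.e.\ the Fano index of $\mathrm{Gr}(2,n)$. Your chart computation checks out: the weight is $(4-n)w_{12}+\sum_{j\geq 3}(w_{1j}+w_{2j})=2\sum_i u_i=\frac{2}{n-1}\sum_{i<j}w_{ij}$, and the $S_l$ factors split off as in the paper. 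What the paper's route buys is that the full equivariant structure of the dualizing module comes packaged with the resolution (no separate pinning step, and a template for $\mathrm{Gr}(d,n)$ once the resolution's top term is known); what your route buys is economy of input --- only Gorensteinness, factoriality and the classical index $n$ are needed, so the argument transfers verbatim to any arithmetically Gorenstein, arithmetically factorial cone with known $a$-invariant. Both proofs ultimately rest on the same two ingredients: the normal form for admissible gradings forced by homogeneity of the Plücker relations, and the identification of $-\mathcal{K}_X$ with the $K$-weight of the generator of the (equivariant) dualizing module of the total coordinate space.
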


In order to prove the above proposition, 
we will compare the $H$-action on $\mathrm{Spec}\, R(n,0) = \mathrm{aGr}(2,n)$ defined in
Construction~\ref{constr:intrinsGrassmannian} with the 
following one; see~\cite{wg}:

\begin{remark}\label{rem:symmtryGroup}
Consider the slightly modified Plücker embedding
$\mathrm{aGr}(2,n) \subseteq \bigwedge^2 \KK^n \otimes \KK$.
Then $\mathrm{GL}(n; \KK)$ acts on the first factor and $\KK^*$ acts on the second by scalar multiplication.
These actions leave $\mathrm{aGr}(2,n)$ invariant. Now let $H \subseteq \TT^n$ be any sub-quasitorus of the $n$-dimensional standard torus and fix elements $\chi^{w_1}, \ldots, \chi^{w_n}, \chi^u$ in its character group $\Chi(H)$. 
We consider the morphism
$$
H \rightarrow \mathrm{Gl}(n; \KK) \times \KK^*
\qquad
t \mapsto 
\left(
\begin{pmatrix}
\chi^{w_1}(t) & & 
\\
& \ddots & 
\\
& & \chi^{w_n}(t)
\end{pmatrix},
\chi^u(t)
\right).
$$
Then the composition with
the action of $\mathrm{GL}(n; \KK) \times \KK^*$ on $\bigwedge^2 \KK^n \otimes \KK$
gives rise to an $H$-action on $\mathrm{aGr}(2,n)$.
The corresponding $\Chi(H)$-grading on 
$R(n,0)$ is given via
$\mathrm{deg}(T_{ij}) = w_i + w_j + u$.
\end{remark}

\begin{remark}
Let $R(n,0)$ be as in Construction \ref{constr:intrinsGrassmannian}
and fix an effective $K$-grading by a finitely generated abelian group $K$ that leaves the variables $T_{ij}$ as well as 
the relations $g_I$ homogeneous.
Then $K$ is isomorphic to a factor group of $\ZZ^n$. In particular $H := \Spec \, \KK[K]$ can be realized as a subtorus of the standard $n$-torus.
\end{remark}

\begin{lemma}\label{lem:grading}
Let $R(n,0)$ be as in Construction~\ref{constr:intrinsGrassmannian} and fix a finitely generated abelian group $K$. Then any $K$-grading that leaves the variables $T_{ij}$ as well as the relations $g_I$ homogeneous arises via 
an $H:=\mathrm{Spec} \, \KK[K]$-action as constructed in Remark~\ref{rem:symmtryGroup}.
\end{lemma}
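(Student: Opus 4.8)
The plan is to reduce the statement to a purely combinatorial reconstruction problem for the degree vector and then identify the resulting grading with one coming from Remark~\ref{rem:symmtryGroup}. Writing $w_{ij} := \deg(T_{ij}) \in K$, the only constraint imposed is that each Pl\"ucker relation $g_I = T_{ab}T_{cd} - T_{ac}T_{bd} + T_{ad}T_{bc}$, with $I = \{a,b,c,d\}$, be homogeneous, which is equivalent to the system of equalities
$$
w_{ab} + w_{cd} = w_{ac} + w_{bd} = w_{ad} + w_{bc}
$$
for every four-element subset $\{a,b,c,d\} \subseteq \{1,\dots,n\}$. On the other hand, by Remark~\ref{rem:symmtryGroup} the gradings arising from an $H$-action are exactly those of the shape $w_{ij} = w_i + w_j + u$ for suitable $w_1, \dots, w_n, u \in K = \Chi(H)$, and since $H = \Spec\,\KK[K]$ embeds into $\TT^n$ by the preceding remark, any such choice of characters is admissible. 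It therefore suffices to prove that the homogeneity relations above force the degrees into this additive shape.

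First I would extract from the relations the key observation that the difference $w_{xp} - w_{xq}$ does not depend on the index $x$: for fixed distinct $p,q$ and any two further indices $x,y \notin \{p,q\}$, applying the homogeneity equation to the four-subset $\{x,y,p,q\}$ yields $w_{xp} - w_{xq} = w_{yp} - w_{yq}$, where $n \geq 4$ guarantees that such indices exist. This lets me define a well-defined antisymmetric function $f(p,q) := w_{xp} - w_{xq}$ on ordered pairs of distinct indices.

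Next I would show that $f$ is additive, in the sense that $f(p,r) = f(p,q) + f(q,r)$, which follows by evaluating all three values with a common auxiliary index $x \notin \{p,q,r\}$ (again available since $n \geq 4$) and telescoping. A cocycle of this type is a coboundary: fixing the index $1$ and setting $w_p := f(p,1)$ gives $f(p,r) = w_p - w_r$, so that $w_{xp} - w_p$ is independent of $p$ for $p \neq x$. Invoking the symmetry $w_{xp} = w_{px}$, I would then deduce that $w_{xp} - w_p - w_x$ is a single constant $u \in K$, independent of both indices, i.e. $w_{ij} = w_i + w_j + u$, which is exactly the desired form.

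The main point to watch is that $K$ is an arbitrary finitely generated abelian group and may have torsion, so the whole reconstruction must be carried out additively, without ever dividing; fixing base points and telescoping, rather than averaging, keeps every step valid in $K$. The remaining subtlety is bookkeeping: checking that the auxiliary function $f$ is genuinely well defined (independence of the chosen index $x$), and that the degenerate case $n = 4$, where only one four-subset is available, still supplies all instances of the homogeneity relation used above. Once the additive form is established, identifying the grading with the $H$-action of Remark~\ref{rem:symmtryGroup} through the characters $w_1,\dots,w_n$ and $u$ is immediate.
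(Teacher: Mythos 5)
Your proposal is correct, and it reaches the additive form $w_{ij}=w_i+w_j+u$ by a genuinely different route than the paper. The paper introduces the ``doubled'' quantities $\tilde w_k := w_{kj}+w_{ik}-w_{ij}$ (well defined by homogeneity), picks $w_1,u$ with $\tilde w_1 = 2w_1+u$, propagates this to all $\tilde w_k = 2w_k+u$, and then computes $\tilde w_i+\tilde w_j = 2w_{ij}$, i.e.\ $2(w_i+w_j+u)=2w_{ij}$, from which it reads off the desired form. You instead work with the first differences $f(p,q):=w_{xp}-w_{xq}$, verify independence of the auxiliary index $x$ via the identity $w_{xp}+w_{yq}=w_{xq}+w_{yp}$ coming from $g_{\{x,y,p,q\}}$, establish the cocycle identity $f(p,r)=f(p,q)+f(q,r)$ by telescoping, trivialize it by $w_p:=f(p,1)$, and finish with the symmetry $w_{xp}=w_{px}$ to extract the constant $u$. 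Both arguments use exactly the same input (the three equal pair-sums per four-element subset, and $n\ge 4$ to supply auxiliary indices), but your version has a concrete advantage that you correctly flag: it never divides by $2$. Since the lemma is stated for an arbitrary finitely generated abelian group $K$, the paper's final step --- passing from $2(w_i+w_j+u)=2w_{ij}$ to $w_{ij}=w_i+w_j+u$ --- is delicate in the presence of $2$-torsion, whereas your purely additive reconstruction is valid verbatim for any $K$. The only point to make explicit when writing this up is the convention $w_1=0$ (so that $w_p-w_r=f(p,r)$ also when one of $p,r$ equals $1$), and, as in the paper, the identification of the resulting grading with an $H$-action as in Remark~\ref{rem:symmtryGroup} via the characters $\chi^{w_1},\dots,\chi^{w_n},\chi^{u}$.
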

\begin{proof}
We fix any $K$-grading on $R(n,0)$ that leaves the variables $T_{ij}$ as well as the relations $g_I$ homogeneous
and set $w_{ij} := \mathrm{deg}(T_{ij})$.
Furthermore for $1 \leq k \leq n$ set 
$\tilde{w}_k := w_{kj} + w_{ik} - w_{ij}$,
where $1 \leq i,j\leq n$ and $i,j,k$ are pairwise different. Here, we identify $w_{ij}$ with $w_{ji}$ if necessary. 
We note that $\tilde{w}_k$ does not depend on the choice of $i$ and $j$ as for pairwise different integers
$1 \leq i_1,i_2,j_1,j_2,k \leq n$ homogeneity of the relations implies
\begin{equation}\label{equ4}
   w_{kj_1} + w_{i_1k} - w_{i_1j_1}  
= w_{kj_2} + w_{i_2k} - w_{i_2j_2}. 
\end{equation}
Choose $w_1, u \in K$
such that $\tilde{w}_1 = 2 w_1 + u$.
Then using (\ref{equ4}), for any $2 \leq k \leq n$ we obtain $w_k \in K$ with $\tilde{w}_k = 2 w_k + u$.
Then for $1 \leq i < j \leq n$ we obtain
$$
2 (w_i + w_j + u) = \tilde{w}_i + \tilde{w}_j = 
(w_{ij} + w_{kj} - w_{ik}) + (w_{ij}+w_{ik}-w_{kj}) = 2w_{ij},
$$
where $1 \leq k\leq n$ with $k \notin \left\{i,j\right\}$.
In particular, the $K$-grading defined via the weights $w_{ij}$ corresponds to
the quasitorus action 
defined by 
$$
\mathrm{Spec}\, \KK[K] \rightarrow \mathrm{GL}(n; \KK) \times \KK^*
\qquad
t \mapsto 
\left(
\begin{pmatrix}
\chi^{w_1}(t) & & 
\\
& \ddots & 
\\
& & \chi^{w_n}(t)
\end{pmatrix},
\chi^u(t)
\right)
$$
composed with
the action of $\mathrm{GL}(n; \KK) \times \KK^*$ on $\bigwedge^2 \KK^n \otimes \KK$.
\end{proof}

\begin{proof}[Proof of Proposition \ref{prop:AnticanClass}]
First we deal with the case that $X$ is a full intrinsic Grassmannian, i.e.\ we have $m=0$. 
Consider the embedding 
$\mathrm{aGr}(2,n) \subseteq \bigwedge^2 \KK^n \otimes \KK$
as introduced in Remark \ref{rem:symmtryGroup}.
We set $V$ for the vectorspace $\KK^n$ endowed with the natural action of $\mathrm{GL}(n;\KK)$
and $L$ for $\KK$ endowed with the natural action of $\KK^*$. 
Furthermore set 
$$
D:= \bigwedge^n V \otimes L^2
$$
with the induced action.
Then following \cite{wg}*{Sec. 2} and \cite{Weyman-Book}*{Sec. 6.4} we obtain
a $\mathrm{GL}(n; \KK) \times \KK^*$-equivariant
projective resolution 
of length \(c := \mathrm{codim}(\mathrm{aGr}(2,n))\)
of the 
structure sheaf $\mathcal{O}_{\mathrm{aGr}(2,n)}$
\begin{equation}
\label{eq:res}
0 \leftarrow\ \OOO\leftarrow \mathcal{F}_1\leftarrow \cdots\leftarrow \mathcal F_c \leftarrow 0 
\qquad
\text{with}
\qquad
\mathcal{F}_c = \mathcal{O} \otimes D^{n-3} \otimes L^{\binom{n-3}{2}},
\end{equation}
where $\mathcal{O}$ denotes the structure sheaf of $\KK^{\binom{n}{2}}$.
Now, consider a geometric invariant quotient $X \subseteq Z$ 
of $\mathrm{aGr}(2,n)$ with respect to a quasitorus $H$ as constructed in \ref{constr:intrinsGrassmannian}.
Then due to Lemma~\ref{lem:grading}  the $H$-action on $\mathrm{aGr}(2,n)$ can be realized as a subquasitorus-action of the action of $\mathrm{GL}(n; \KK) \times \KK^*$. 
In particular, we can read the anticanonical class off the above resolution:
\begin{align*}
\Chi(H) = \mathrm{Cl}(X) \ni - \mathcal{K}_X
&= 
\mathcal{K}_Z
-
\mathrm{deg}(D^{n-3} \otimes L^{\binom{n-3}{2}})
\\
&=
\left(
1 -  \frac{n-3}{n-1}\right)
\sum
w_{ij}
= 
\left(\frac{2}{n-1} \right)
\sum
w_{ij}.
\end{align*}
Now assume $m > 1$. 
As the free variables $S_l$, where $1\le l\le m$ of degrees \(w_1,\ldots,w_m\) are not involved in the defining relations of \(\aGr(2,n)\) the  free resolution remains the same and we get the required formula \eqref{eq:mg_can}.
\end{proof}

\begin{rmk} 
The canonical bundle formulae  of \(\w\Gr(2,5)\) in \cite{wg} and of \(\w\Gr(2,6)\) in \cite{QS} are special cases of  \eqref{eq:mg_can} with \(K=\ZZ\) and \(m=0\).\end{rmk}

In order to obtain a smoothness criterion for an intrinsic Grassmannian $X$ of type $(2,n)$ we cut down the orbits of the ambient toric variety $Z$ to the variety $X$:

\begin{construction}\label{constr:relevantFaces}
Let $X \subseteq Z$ arise via Construction \ref{constr:intrinsGrassmannian}, let ${\gamma := \QQ_{\geq0}^{\binom{n}{2} + m}}$ denote
the positive orthant 
with rays $\gamma_{ij} := \mathrm{cone}(e_{ij})$
and $\gamma_l := \mathrm{cone}(e_l)$
and consider the linear map
$$
Q \colon \ZZ^{\binom{n}{2} + m} \rightarrow K, \quad 
e_{ij} \mapsto w_{ij}, \ e_{l} \mapsto w_{l}.
$$
Let $\gamma_0 \preceq \gamma$ be any face and denote with $\overline{Z}(\gamma_0)$ the set of all points $z \in \overline{Z}$ 
with coordinates $z_{ij} \neq 0$ resp. $z_l\neq 0$ whenever $e_{ij} \in \gamma_0$ resp. $e_l \in \gamma_0$. We call a face $\gamma_0 \preceq \gamma$ a \emph{$Z$-relevant face} if $Q(\gamma_0) \subseteq K_\QQ$ contains $u$ in its relative interior. 
We have 
$$\overline{Z}^{ss}(u) = \bigcup_{\substack{\gamma_0 \preceq \gamma_1 \preceq \gamma \\
\gamma_0 \text{ $Z$-relevant}}} \overline{Z}(\gamma_1).
$$
Moreover, let $\pi \colon \overline{Z}^{ss}(u) \rightarrow Z$ denote the quotient map. Then the images $Z(\gamma_0) := \pi(Z(\gamma_0))$, where $\gamma_0 \preceq \gamma$ is a $Z$-relevant face, are precisely the torus orbits of $Z$. 

Now, we call $\gamma_0\preceq \gamma$ an \emph{$\overline{X}$-face} if 
$
\overline{X}(\gamma_0) := \overline{Z}(\gamma_0) \cap \overline{X}
$
is non-empty and say $\gamma_0$ is \emph{$X$-relevant} if furthermore it is $Z$-relevant. We write $\mathrm{rlv}(X)$ for the set of $X$-relevant faces for short. Note that the $X$-relevant faces correspond to those torus orbits of $Z$ that intersect $X$ non-trivially. 
For a relevant face $\gamma_0 \preceq \gamma$, we set $X(\gamma_0) := Z(\gamma_0) \cap X \subseteq X$ and call these subsets the \emph{pieces of $X$}. 
\end{construction}

\begin{lemma}\label{lem:XbarFaces}
Let $X \subseteq Z$ be an intrinsic Grassmannian of type $(2,n)$ 
as in Construction \ref{constr:intrinsGrassmannian} and let $\gamma :=  \QQ_{\geq 0}^{\binom{n}{2} + m}$ denote the
positive orthant with rays $\gamma_{ij} := \mathrm{cone}(e_{ij})$
and $\gamma_l := \mathrm{cone}(e_l)$.
Set furthermore
$$
\gamma_{l_1,l_2} := \gamma_{l_1} + \gamma_{l_2},
\quad
\gamma_{l, i_1j_1} := \gamma_l + \gamma_{i_1j_1}
\quad
\text{and}
\quad
\gamma_{i_1j_1, i_2j_2} := \gamma_{i_1j_1} + \gamma_{i_2j_2}
$$
Then the following statements hold:
\begin{enumerate}
    \item 
    All faces $\gamma_l$, $\gamma_{l_1,l_2}$, $\gamma_{ij}$ 
    and  $\gamma_{l,ij}$ are $\overline{X}$-faces.
    \item
    A face $\gamma_{i_1j_1, i_2j_2}$ with $(i_1,j_1) \neq (i_2,j_2)$ is an $\overline{X}$-face
    if and only if $|\left\{i_1, i_2, j_1, j_2\right\}| = 3.$
    \item
    Let $\gamma_0$ be any $\overline{X}$-face. 
    Then $\overline{X}(\gamma_0)$ consists of smooth points if and only if there exists at least one $\gamma_{ij}$ with
    $\gamma_{ij} \preceq \gamma'$.
    \item
    The piece $X(\gamma_0)$ associated to an $X$-relevant face $\gamma_0 \preceq \gamma$ consists of smooth points of $X$ if and only if the following two conditions hold:
    \begin{enumerate}
        \item 
        $Q(\mathrm{lin}_{\QQ}(\gamma_0) \cap \ZZ^{\binom{n}{2} + m})$
        generates $K$ as a group.
        \item
        There exists at least one $\gamma_{ij}$ with $\gamma_{ij} \preceq \gamma_0$.
    \end{enumerate}
\end{enumerate}
\end{lemma}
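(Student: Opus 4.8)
The plan is to treat the four assertions separately: the first three reduce to the combinatorial shape of the Plücker generators together with the singular locus of $\aGr(2,n)$, while (iv) then follows by feeding (iii) into the standard local smoothness criterion for the quotient.

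For (i) and (ii) the observation I would isolate first is that every generator $g_I = T_{ab}T_{cd} - T_{ac}T_{bd} + T_{ad}T_{bc}$, with $I = \{a,b,c,d\}$, is a signed sum of three monomials, each a product of two $T$-variables whose index pairs are \emph{complementary} $2$-subsets of $I$. Hence, for a point $z\in\overline Z$ with support exactly the rays of $\gamma_0$, a monomial of $g_I$ can survive only if both of its (disjoint) index pairs occur among the $\gamma_{ij}\preceq\gamma_0$. For (i), the faces $\gamma_l$, $\gamma_{l_1,l_2}$, $\gamma_{ij}$, $\gamma_{l,ij}$ carry at most one $T$-variable, so no monomial of any $g_I$ has two nonzero factors; all relations vanish identically and the point with the listed coordinates set to $1$ exhibits $\overline X(\gamma_0)\neq\emptyset$. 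For (ii), two distinct pairs with $|\{i_1,i_2,j_1,j_2\}|=3$ share an index and are therefore never complementary inside any $4$-set, so again every $g_I$ vanishes on the stratum and the point with the two coordinates equal to $1$ lies in $\overline X$; if instead $|\{i_1,i_2,j_1,j_2\}|=4$, the two pairs are complementary inside $I:=\{i_1,j_1,i_2,j_2\}$, so exactly one monomial of $g_I$ equals $\pm T_{i_1j_1}T_{i_2j_2}\neq 0$ while the other two vanish, forcing $\overline X(\gamma_0)=\emptyset$. This last nonvanishing, the only place a genuine Plücker relation is used, is what upgrades (ii) to an \emph{if and only if}.

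For (iii) I would use the product description $\overline X = \aGr(2,n)\times\KK^m$, where $\aGr(2,n)$ is the affine cone over the Plücker-embedded $\Gr(2,n)$. Since $\Gr(2,n)$ is smooth and, for $n\geq 4$, is not a linear subspace, its cone is smooth away from the vertex, whence $\Sing(\overline X)=\{0\}\times\KK^m$. A point of the stratum $\overline X(\gamma_0)$ has nonzero $T$-coordinates precisely at the rays $\gamma_{ij}\preceq\gamma_0$; thus it avoids the singular locus iff at least one such $\gamma_{ij}$ is present, and since all points of a fixed stratum share the same support, $\overline X(\gamma_0)$ is either entirely smooth or entirely singular. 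This gives (iii).

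Finally, for (iv) I would invoke the local smoothness criterion for $X = \overline X^{ss}(u)/\!\!/H$ from \cite{ArDeHaLa}: for $z\in\overline X(\gamma_0)$ with image $x=\pi(z)$, the point $x$ is smooth iff $\overline X$ is smooth at $z$ and $H$ acts freely at $z$. The first condition is (b) by (iii). For the second I would compute the isotropy as $H_z = \Hom\bigl(K/Q(\lin_\QQ(\gamma_0)\cap\ZZ^{\binom{n}{2}+m}),\,\KK^*\bigr)$, which is trivial exactly when $Q(\lin_\QQ(\gamma_0)\cap\ZZ^{\binom{n}{2}+m})=K$, i.e.\ condition (a); equivalently, this is local factoriality of $X$ at $x$. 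Combining the two equivalences yields (iv). I expect the main obstacle to be (iv): one must match the purely lattice-theoretic condition (a) with triviality of the stabilizer (equivalently with local factoriality) and pin down the correct quotient smoothness statement, whereas (i)--(iii) are elementary once the complementary-pair structure of the $g_I$ and the singular locus of the Grassmannian cone are in hand.
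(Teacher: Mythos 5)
Your proposal is correct and follows essentially the same route as the paper: (i) and (ii) via the complementary-pair structure of the monomials in the Pl\"ucker generators $g_I$, (iii) via the fact that the origin is the only singular point of $\mathrm{aGr}(2,n)$, and (iv) by combining (iii) with the local smoothness/factoriality criterion of \cite{ArDeHaLa}*{Prop.~3.3.1.10}, whose content you correctly reconstruct as triviality of the isotropy group $H_z$. The paper's proof is simply a terser version of the same argument, so no further comparison is needed.
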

\begin{proof}
Assertion~(i) follows directly from the definition of an $\overline{X}$-face. 
We prove (ii). A face $\gamma_{i_1j_1, i_2j_2}$ is an $\overline{X}$-face if and only if there exists no term
of the form $T_{i_1j_1}T_{i_2j_2}$ in any of the relations $g_I$. The latter is equivalent to $|\left\{i_1, i_2, j_1, j_2\right\}| = 3$.
Assertion (iii) follows from the fact that $0$ is the only singular point of $\mathrm{aGr}(2,n)$. Assertion (iv) follows from (iii) and 
\cite{ArDeHaLa}*{Prop. 3.3.1.10}
\end{proof}

\begin{remark}\label{rem:effAmpleEtc}
Let $X\subseteq Z$ be an intrinsic Grassmannian of type $(2,n)$ as in Construction \ref{constr:intrinsGrassmannian}. 
Then in $K = \mathrm{Cl}(X)$ the Picard group is given as
$$
\mathrm{Pic}(X) =
\bigcap_{\gamma_0 \preceq \gamma \text{ relevant}}
Q( \mathrm{lin}(\gamma_0) \cap \ZZ^{\binom{n}{2} + m})
$$
Moreover, due to \cite{ArDeHaLa}*{Prop. 3.3.2.9} the
cones of effective, movable, semiample and ample divisor classes in $K_\QQ = \mathrm{Cl}(X)_\QQ$ are given as
$$
\mathrm{Eff}(X) = Q(\gamma),
\qquad
\mathrm{Mov}(X) = \bigcap_{\gamma_0 \preceq \gamma \text{ facet}}
Q(\gamma_0),
$$
$$
\mathrm{SAmple}(X) = \bigcap_{\gamma_0 \preceq \gamma \text{ relevant}} Q(\gamma_0),
\qquad
\mathrm{Ample}(X) = \bigcap_{\gamma_0 \preceq \gamma \text{ relevant}} Q(\gamma_0)^\circ,
$$
where $Q(\gamma_0)^\circ$ denotes the relative interior of $Q(\gamma_0)$.
Moreover, the variety $X$ is $\QQ$-factorial if and only if the semiample cone
$\mathrm{SAmple}(X)$ is of full dimension in $K_\QQ$.
\end{remark}

\section{Classification in Picard Number two}\label{sec:classification}
In this section we provide
a complete classification of  smooth intrinsic Grassmannians of type $(2,n)$ with Picard number two; see Theorem~\ref{thm:classNonFull}.
Moreover, in Corollary \ref{cor:FanoNonFull} we obtain criteria for 
these varieties 
to be Fano, by using our description of the anticanonical class in 
Proposition~\ref{prop:AnticanClass}. 
Note that these results generalize
Theorem~\ref{thm:classification} and Corollary~\ref{cor:FanoClassification} stated in the introduction, which treat the case of \emph{full} intrinsic Grassmannians of type~$(2,n)$.

\begin{theorem}\label{thm:classNonFull}
Let $X$ be a smooth intrinsic Grassmannian of type $(2,n)$ with Picard number $\rho(X) =2$. Then $X$ has divisor class group $\mathrm{Cl}(X) = \ZZ^2$ and 
the $\mathrm{Cl}(X)$-grading of $\mathcal{R}(X) = R(n,m)$ and the semiample cone are of one of the six types below;
we write $w_{ij} := \mathrm{deg}(T_{ij})$
and $w_l = \mathrm{deg}(S_l)$
for the $\mathrm{Cl}(X)$-degrees.

\vspace{1mm}
\noindent
\textbf{Type 1:} 
Fix integers $4 \leq k \leq n$, $m \geq 0$ and $a \geq 0$. 
Moreover fix sequences of integers
$ 0 \leq  \alpha_k \leq \alpha_{k+1} \leq \ldots \leq \alpha_n\leq a$
and
$ 0 \leq \beta_1 \leq \ldots \leq \beta_m \leq a$.
We have
\begin{enumerate}
    \item
    $w_l = (\beta_l,1)$ for $1 \leq l \leq m$,
    \item 
    $w_{ij} = (1,0)$, whenever $j < k$ holds,
    \item
    $w_{ij} = (\alpha_j, 1)$, whenever $i < k \leq j$ holds,
    \item 
    $w_{ij} = (\alpha_i + \alpha_j -1, 2)$, whenever $k \leq i$ holds, and
    \item
    $(0,1)$ and $(a,1)$ occur among the weights $w_{ij}$ and $w_l$.
\end{enumerate}
The semiample cone looks as follows:

\begin{center}
\begin{tikzpicture}[scale=0.6]
\path[fill=gray!60!] (0,0)--(9,1.5)--(9,0);
\draw[-,thick] (0,0)--(9,1.5);
\draw[-,thick] (0,0)--(9,0);
\path[fill, color=black] (1,0) circle (0.5ex);
\path[fill, color=black] (2,0) circle (0.0ex)
node[below]{\small $w_{ij}$ for $j < k$};
\path[fill, color=black] (6,1) circle (0.5ex) 
node[above]{\small $(a,1)$};
\path[fill, color=black] (0,1) circle (0.5ex)
node[left]{\small $(0,1)$};
\path[fill, color=black] (2.5,1) circle (0.3ex);
\path[fill, color=black] (3,1) circle (0.3ex);
\path[fill, color=black] (3.5,1) circle (0.3ex);
\path[fill, color=black] (4,2) circle (0.5ex);
\path[fill, color=black] (3.6,2.5) circle (0.0ex)
node[right]{\small $w_{ij}$ for $i,j\geq k$};
\path[fill, color=black] (1.5,2) circle (0.3ex);
\path[fill, color=black] (2,2) circle (0.3ex);
\path[fill, color=black] (2.5,2) circle (0.3ex);
\path[fill, color=black] (5,2) circle (0.3ex);
\path[fill, color=black] (5.5,2) circle (0.3ex);
\path[fill, color=black] (6,2) circle (0.3ex);
\draw[-,thick] (-1,0)--(10,0);
\draw[-,thick] (0,-1)--(0,3);
\end{tikzpicture}   
\end{center}

\vspace{1mm}
\noindent
\textbf{Type 2:} 
Fix integers
$m \geq 0$ and $a \geq 0$. Let furthermore $0 \leq \alpha \leq a$
and
$0 \leq \beta_1 \leq \ldots \leq \beta_m \leq a$ be integers.
We have
\begin{enumerate}
    \item
    $w_l = (\beta_l,1)$ for $1 \leq l \leq m$,
    \item 
    $w_{ij} = (\alpha,1)$ whenever $j < n$ holds,
    \item
    $w_{in} = (1, 0)$ for $1 \leq i \leq n-1$, and
    \item
    $(0,1)$ and $(a,1)$ occur among the weights $w_{ij}$ and $w_l$.
\end{enumerate}
The semiample cone looks as follows:

\begin{center}
\begin{tikzpicture}[scale=0.6]
\path[fill=gray!60!] (0,0)--(9,1.5)--(9,0);
\draw[-,thick] (0,0)--(9,1.5);
\draw[-,thick] (0,0)--(9,0);
\path[fill, color=black] (1,0) circle (0.5ex)
node[below]{\small $w_{in}$};
\path[fill, color=black] (6,1) circle (0.5ex) 
node[above]{\small $(a,1)$};
\path[fill, color=black] (0,1) circle (0.5ex)
node[left]{\small $(0,1)$};
\path[fill, color=black] (2.5,1) circle (0.3ex);
\path[fill, color=black] (3,1) circle (0.3ex);
\path[fill, color=black] (3.5,1) circle (0.3ex);
\draw[-,thick] (-1,0)--(10,0);
\draw[-,thick] (0,-1)--(0,3);
\end{tikzpicture}   
\end{center}


\vspace{1mm}
\noindent
\textbf{Type 3:} 
Fix integers $4 \leq k < n$ and
$m \geq 0$.
We have
\begin{enumerate}
    \item
    $w_1 =  \ldots = w_m = (1,0)$,
    \item 
    $w_{ij} = (2,1)$, whenever $j < k$ holds,
    \item
    $w_{ij} = (0,1)$, whenever
    $k \leq i$.
    \item 
    $w_{ij} = (1,1)$ else.
\end{enumerate}
The semiample cone looks as follows:

\begin{center}
\begin{tikzpicture}[scale=0.6]
\path[fill=gray!60!] (0,0)--(6,3)--(3,3);
\draw[-,thick] (0,0)--(6,3);
\draw[-,thick] (0,0)--(3,3);
\path[fill, color=black] (1,0) circle (0.5ex)
node[below]{\small $w_l$};
\path[fill, color=black] (2,1) circle (0.5ex);
\path[fill, color=black] (0,1) circle (0.5ex);
\path[fill, color=black] (1,1) circle (0.5ex);
\draw[-,thick] (-1,0)--(10,0);
\draw[-,thick] (0,-1)--(0,3);
\end{tikzpicture}   
\end{center}


\vspace{1mm}
\noindent
\textbf{Type 4:} 
Fix an integer $m\geq 1$. We have
\begin{enumerate}
    \item $w_1 = \ldots = w_m = (1,0)$ and $w_{12} = (2,1)$
    \item 
    $w_{ij} = (1,1)$, whenever $i = 1,2$ holds, and
    \item
    $w_{ij} = (0,1)$, whenever $i \geq 3$ holds.
\end{enumerate}
The semiample cone looks as follows:

\begin{center}
\begin{tikzpicture}[scale=0.6]
\path[fill=gray!60!] (0,0)--(6,3)--(3,3);
\draw[-,thick] (0,0)--(6,3);
\draw[-,thick] (0,0)--(3,3);
\path[fill, color=black] (1,0) circle (0.5ex)
node[below]{\small $w_l$};
\path[fill, color=black] (2,1) circle (0.5ex)
node[right]{\small $w_{12}$};
\path[fill, color=black] (0,1) circle (0.5ex);
\path[fill, color=black] (1,1) circle (0.5ex);
\draw[-,thick] (-1,0)--(10,0);
\draw[-,thick] (0,-1)--(0,3);
\end{tikzpicture}   
\end{center}

\vspace{1mm}
\noindent
\textbf{Type 5:} 
Fix an integer $m\geq 2$ and
a sequence of integers
$0 = b_3 \leq  b_2 \leq b_1 \leq \alpha_4 \leq \ldots \leq \alpha_n$. We have $w_{ij} =(x_{ij}, 1)$ with $x_{ij} \geq 0$
for all $(i,j)$
and 
\begin{enumerate}
    \item 
    $w_1 = \ldots = w_m = (1,0)$,
    \item 
    $w_{12} = (b_3,1) = (0,1)$, $w_{13} =(b_2,1)$ and $w_{23} = (b_1,1)$,
    \item 
    $w_{ij} = (\alpha_j-b_i,1)$
    for all $i \leq 3 < j$,
    \item
    $w_{ij} = 
    (\alpha_i + \alpha_j - b_1 - b_2, 1)$
    else.
\end{enumerate}
The semiample cone looks as follows:

\begin{center}
\begin{tikzpicture}[scale=0.6]
\path[fill=gray!60!] (0,0)--(9,1.5)--(9,0);
\draw[-,thick] (0,0)--(9,1.5);
\draw[-,thick] (0,0)--(9,0);
\path[fill, color=black] (1,0) circle (0.5ex)
node[below]{\small $w_l$};
\path[fill, color=black] (6,1) circle (0.5ex) 
node[above]{\small $(\mathrm{max}(x_{ij}),1)$};
\path[fill, color=black] (0,1) circle (0.5ex)
node[left]{\small $w_{12}$};
\path[fill, color=black] (2.5,1) circle (0.3ex);
\path[fill, color=black] (3,1) circle (0.3ex);
\path[fill, color=black] (3.5,1) circle (0.3ex);
\draw[-,thick] (-1,0)--(10,0);
\draw[-,thick] (0,-1)--(0,3);
\end{tikzpicture}   
\end{center}

\vspace{1mm}
\noindent
\textbf{Type 6:} 
Fix an integer $m\geq 2$ and
a sequence of integers
$0 =  \beta_1 \leq \ldots \leq \beta_m$. We have
\begin{enumerate}
    \item $w_l = (\beta_l,1)$
    for $1 \leq l \leq m$, and
    \item 
    $w_{ij} = (1,0)$ for all $i,j$.
\end{enumerate}
The semiample cone looks as follows:

\begin{center}
\begin{tikzpicture}[scale=0.6]
\path[fill=gray!60!] (0,0)--(9,1.5)--(9,0);
\draw[-,thick] (0,0)--(9,1.5);
\draw[-,thick] (0,0)--(9,0);
\path[fill, color=black] (1,0) circle (0.5ex)
node[below]{\small $w_{ij}$};
\path[fill, color=black] (6,1) circle (0.5ex) 
node[above]{\small $(\beta_m,1)$};
\path[fill, color=black] (0,1) circle (0.5ex)
node[left]{\small $(\beta_1,1)$};
\path[fill, color=black] (2.5,1) circle (0.3ex);
\path[fill, color=black] (3,1) circle (0.3ex);
\path[fill, color=black] (3.5,1) circle (0.3ex);

\draw[-,thick] (-1,0)--(10,0);
\draw[-,thick] (0,-1)--(0,3);
\end{tikzpicture}   
\end{center}
\end{theorem}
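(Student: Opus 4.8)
The plan is to turn the classification into a combinatorial problem about finitely many weight vectors in $\ZZ^2$ and then run a case analysis driven by the smoothness criterion of Lemma~\ref{lem:XbarFaces}. The starting point is Lemma~\ref{lem:grading}: any admissible grading of the $T$-variables is governed by per-index vectors, so there are elements $v_1,\dots,v_n,u\in K$ with $w_{ij}=v_i+v_j+u$, while the free variables $S_l$ carry unconstrained degrees $w_l$. One first checks $K\cong\ZZ^2$: smoothness gives $\Cl(X)=\Pic(X)$, and by Lemma~\ref{lem:XbarFaces}(iv)(a) the image of every $X$-relevant face generates $K$; evaluating this on a two-dimensional relevant face shows $K$ is two-generated, so $\rho(X)=2$ leaves no room for torsion. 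I would then normalize: $\GL(2,\ZZ)$ acts on $K$, the symmetric group $\mathfrak{S}_n$ permutes the indices $1,\dots,n$ (realized inside the $\GL(n;\KK)$-action of Remark~\ref{rem:symmtryGroup}, hence yielding isomorphic intrinsic Grassmannians), and the $S_l$ may be reindexed. Using pointedness I place $\Eff(X)=Q(\gamma)$ in the closed upper half plane with bottom boundary ray along the positive first axis, and I sort the $v_i$ so that their second coordinates $\epsilon_i$ are nondecreasing and order the free variables by increasing degree.

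With coordinates fixed, the second coordinate (\emph{level}) of $w_{ij}$ equals $\epsilon_i+\epsilon_j+\delta$, where $u=(c,\delta)$, an affine function of the per-index levels $\epsilon_i$. The heart of the argument is to show that smoothness forces the $\epsilon_i$ to take at most two distinct values. The mechanism is Lemma~\ref{lem:XbarFaces}(ii): two distinct Plücker variables $T_{i_1j_1},T_{i_2j_2}$ span an $\overline X$-face exactly when their index pairs overlap in a single element, and such a two-ray $\overline X$-face is $X$-relevant as soon as its image contains the ample class in its relative interior. If three distinct levels $\epsilon_a<\epsilon_b<\epsilon_c$ occurred, the mutually overlapping pairs among $\{a,b,c\}$ would generate relevant faces violating either the generation condition (iv)(a) or the condition (iv)(b) that every relevant face contain some $\gamma_{ij}$; carrying this through rules out $\ge 3$ values. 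The remaining two-value situation then splits into the case where all $v_i$ share one level (so all $w_{ij}$ sit at one level) and the case where the indices fall into two blocks.

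In the single-level case, condition (iv)(a) applied to a relevant two-ray face forces the common level of the $w_{ij}$ to be $0$ or $1$: level $0$ puts all $T$-variables on the bottom ray (smoothness then pins them to the primitive class $(1,0)$) and makes the free variables climb, yielding Type~6, while level $1$ gives Types~3--5 according to the finer arrangement governed by $m$ and the spread of the first coordinates. In the two-block case write the two values as $p<q$ on blocks $A,B$, so the $w_{ij}$-levels are $2p+\delta$ (within $A$), $p+q+\delta$ (across) and $2q+\delta$ (within $B$); nonnegativity of levels forces the set on the bottom ray to be the internal pairs of a block of size $\ge 2$ --- which, after normalizing the gap to $1$, produces the levels $0,1,2$ and Type~1 --- or, when the lower block is a singleton, the star of pairs through that index, producing Type~2. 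The relation $w_{ij}=v_i+v_j+u$ then converts the per-index first coordinates into the displayed formulas, while the ordering $0\le\alpha_k\le\dots\le\alpha_n\le a$ and the occurrence of $(0,1)$ and $(a,1)$ are extracted from condition (iv) on the walls of the semiample cone. The base case $n=4$ (intrinsic quadrics) is matched against \cite{FaHa}.

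For the converse I would check that each listed configuration meets the hypotheses of Construction~\ref{constr:intrinsGrassmannian} --- pointedness, almost freeness, full-dimensional moving cone --- and is smooth by verifying condition (iv) of Lemma~\ref{lem:XbarFaces} on its finitely many relevant faces; the displayed semiample cones then follow from Remark~\ref{rem:effAmpleEtc}. I expect the principal obstacle to be the bookkeeping in the two-block and single-level analyses: enumerating precisely which overlapping-index faces become relevant for a given position of the ample class, and showing that (iv)(a) and (iv)(b) force exactly the claimed chains of inequalities among the $\alpha_i$ (and $\beta_l$) and nothing more --- in particular eliminating the many near-miss configurations and correctly treating the small-$k$, small-$m$ and $n=4$ boundary cases, where several of the six types threaten to coincide or degenerate.
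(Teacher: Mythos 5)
Your architecture is genuinely different from the paper's: you propose to classify the weights $w_{ij}$ directly from the structural identity $w_{ij}=v_i+v_j+u$ of Lemma~\ref{lem:grading} together with an analysis of the per-index ``levels'' $\epsilon_i$, whereas the paper never uses Lemma~\ref{lem:grading} for the classification at all --- it first proves the full case (Theorem~\ref{thm:classification}) by induction on $n$, deleting an index via Lemma~\ref{lem:deleteVariables} and anchoring the induction in the intrinsic-quadric classification of \cite{FaHa} for $n=4$, and then runs a case distinction on how the weights $w_{ij}$ and $w_l$ distribute over $\tau^+$ and $\tau^-$ (at least two $w_{ij}$ on each side; exactly one $w_{ij}$ on the side of the $w_l$; no $w_{ij}$ on that side), which is what produces the six types. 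Your final three-way split (two blocks, singleton lower block, single level) would in the end shadow that case distinction.

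There is, however, a genuine gap at your pivotal step. You claim that if three indices $a,b,c$ had pairwise distinct levels $\epsilon_a<\epsilon_b<\epsilon_c$, then ``the mutually overlapping pairs among $\{a,b,c\}$ would generate relevant faces'' violating Lemma~\ref{lem:XbarFaces}(iv). But a face such as $\gamma_{ab,ac}$ being an $\overline{X}$-face does not make it $X$-relevant: by Remark~\ref{rem:conesAndCo} it is relevant only when the two weights lie on opposite sides of the ample cone, and nothing prevents $w_{ab},w_{ac},w_{bc}$ from all lying in $\tau^-$ (indeed, in the classified Type~1 all weights with $j\ge k$ lie in $\tau^-$). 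So the triple alone yields no contradiction; to exclude three levels you must bring in further indices and the global distribution of weights over $\tau^+$ and $\tau^-$, including the degenerate situations where $\tau^+$ contains only free-variable weights or only a single $w_{ij}$, in which the determinant conditions come from faces $\gamma_{l,ij}$ rather than from overlapping Pl\"ucker pairs. That bookkeeping is precisely what the paper's induction on $n$ is designed to avoid (it localizes all determinant computations to the weights $w_{in}$ attached to the deleted index), and it is missing from your sketch. The remainder of your plan --- the normalization, the derivation of $\Cl(X)=\ZZ^2$ from a two-dimensional relevant face, the conversion of per-index data into the displayed formulas, and the converse verification of smoothness on the minimal relevant faces --- is consistent with the paper and sound.
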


\begin{corollary}\label{cor:FanoNonFull}
In the notation of Theorem \ref{thm:classNonFull}
the following two tables give criteria 
that give rise to the Fano and
truly almost Fano varieties among the smooth 
intrinsic Grassmannians of type $(2,n)$
with Picard number two:

{
\renewcommand*{\arraystretch}{2.5}
\begin{longtable}{c||c}
Type &
Fano
\\
\hline
\hline
1
&
$(n + \frac{m}{2} -k+1)a 
<

\sum\limits_{j = k}\limits^{n}\alpha_j
+ k - \frac{n}{2} -1
+ \frac{1}{2}\sum\limits_{l = 1 }\limits^m \beta_l$
\\
\hline
2
&
$
(n+m-2)a
< 
\sum\limits_{l=1}\limits^m \beta_l + (n-2)\alpha + 2
$
\\
\hline
3
&
$
1 < \frac{2(k-1) + m }{n} < 2
$
\\
\hline
4
&
$
1 < \frac{4 + m}{n} < 2
$
\\
\hline
5
&
$n \cdot \mathrm{max}(x_{ij}) < 
\sum\limits_{j\geq 4}2 \alpha_j
- (n-4)(b_1 + b_2) +m
$
\\
\hline
6
&
$
1 < \frac{4 + m}{n} < 2
$
\\
\end{longtable}}

{
\renewcommand*{\arraystretch}{2.5}
\begin{longtable}{c||c}
Type &
Almost Fano
\\
\hline
\hline
1
&
$(n + \frac{m}{2} -k+1)a 
=
\sum\limits_{j = k}\limits^{n}\alpha_j
+ k - \frac{n}{2} -1
+ \frac{1}{2}\sum\limits_{l = 1 }\limits^m \beta_l$
\\
\hline
2
&
$
(n+m-2)a
= 
\sum\limits_{l=1}\limits^m \beta_l + (n-2)\alpha + 2
$
\\
\hline
3
&
$
\frac{2(k-1) + m }{n} 
= 1$
\quad
or
\quad
$
\frac{2(k-1) + m }{n}
= 2$
\\
\hline
4
&
$
\frac{4 + m}{n} = 1
$
\quad
or
\quad
$
\frac{4 + m}{n} = 2
$
\\
\hline
5
&
$n \cdot \mathrm{max}(x_{ij}) =
\sum_{j\geq 4}2 \alpha_j
- (n-4)(b_1 + b_2) +m
$
\\
\hline
6
&
$
\frac{4 + m}{n} = 1 
$
\quad 
or
\quad
$
\frac{4 + m}{n} = 2
$
\\
\end{longtable}}
\end{corollary}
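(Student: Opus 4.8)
The plan is to reduce both tables to a single geometric test: the position of the anticanonical class relative to the semiample cone, which is already displayed for each of the six types in Theorem~\ref{thm:classNonFull}. Since $X$ is smooth it is $\QQ$-factorial, so by Remark~\ref{rem:effAmpleEtc} the cone $\mathrm{SAmple}(X)\subseteq \mathrm{Cl}(X)_\QQ=\QQ^2$ is full-dimensional, it coincides with the nef cone, and $\mathrm{Ample}(X)$ is its relative interior. Consequently $X$ is Fano precisely when $-\mathcal{K}_X$ lies in the interior of $\mathrm{SAmple}(X)$, it is almost Fano precisely when $-\mathcal{K}_X\in\mathrm{SAmple}(X)$, and it is truly almost Fano precisely when $-\mathcal{K}_X$ lies on the boundary $\mathrm{SAmple}(X)\setminus\mathrm{SAmple}(X)^\circ$. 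Because the cone is two-dimensional, its boundary is just the pair of extreme rays, so I only have to evaluate $-\mathcal{K}_X$ and compare it against these two rays.

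Next I would, type by type, compute $-\mathcal{K}_X$ from the formula \eqref{eq:mg_can} of Proposition~\ref{prop:AnticanClass}, that is $-\mathcal{K}_X=\tfrac{2}{n-1}\sum_{i<j}w_{ij}+\sum_l w_l$. For each type the generator degrees fall into a few blocks governed by the cutoff $k$ (resp.\ the index pattern), so $\sum_{i<j}w_{ij}$ splits into the corresponding binomial sums: in Type~1 one groups the pairs by $j<k$, $i<k\le j$ and $k\le i$, while in Type~5 one separates the exceptional pairs $\{1,2\},\{1,3\},\{2,3\}$, the pairs meeting $\{1,2,3\}$, and the remaining pairs. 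The key simplification is that every such weight sum carries an explicit factor $(n-1)$ which cancels the $\tfrac{2}{n-1}$: for the second coordinate this comes from $\sum_{i<j}1=\binom{n}{2}$ (hence $\tfrac{2}{n-1}\binom{n}{2}=n$) in Types~3--6 and from direct counting in Types~1--2, and for the first coordinate the residual binomial expressions collapse through the identities $\binom{k-1}{2}-\binom{n-k+1}{2}=(n-1)\bigl(k-\tfrac{n}{2}-1\bigr)$ and $2-(n-2)(n-3)=-(n-1)(n-4)$. In every case this yields $-\mathcal{K}_X$ as an explicit integral vector whose second coordinate is manifestly positive.

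Finally I would read the two extreme rays of $\mathrm{SAmple}(X)$ off the pictures in Theorem~\ref{thm:classNonFull} and translate membership into inequalities. For Types~1, 2 and 5 the cone is $\mathrm{cone}\bigl((1,0),(a,1)\bigr)$, with $a=\max(x_{ij})$ in Type~5, that is $\{(x,y):y\ge 0,\ x\ge ay\}$; since the second coordinate of $-\mathcal{K}_X$ is positive the lower ray imposes no constraint, so Fano-ness is the single strict inequality $x>ay$ and equality $x=ay$ carves out the truly almost Fano locus. For Types~3, 4 and 6 the cone lies strictly between the positive-slope rays $(1,1)$ and $(2,1)$, that is $\{(x,y):y\le x\le 2y\}$, so both bounding rays are active and one gets the two-sided condition $y<x<2y$, with the truly almost Fano case occurring when $-\mathcal{K}_X$ meets either ray (the ``or'' in the table). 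Substituting the computed $-\mathcal{K}_X$ and dividing by the positive second coordinate reproduces exactly the tabulated inequalities.

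The main obstacle is organizational rather than conceptual: carrying out the six separate weight-sum computations without error and pairing each with the correct extreme rays. The only genuinely delicate points are (a) verifying in Types~1, 2 and 5 that $-\mathcal{K}_X$ automatically clears the lower ray, so that a single inequality governs the Fano property, and (b) verifying in Types~3, 4 and 6 that the computed class lies in the region where both bounding rays of the narrow cone can be simultaneously relevant, so that the two-sided ``or'' description of the truly almost Fano case is genuinely complete.
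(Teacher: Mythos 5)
Your method is exactly the paper's: compute $-\mathcal{K}_X$ type by type from Proposition~\ref{prop:AnticanClass} together with the weight data of Theorem~\ref{thm:classNonFull}, and then test whether it lies in the interior (Fano) or on the boundary (truly almost Fano) of the two-dimensional cone $\mathrm{SAmple}(X)$. Your bookkeeping for Types~1, 2 and 5 (lower ray $(1,0)$ inactive because the second coordinate of $-\mathcal{K}_X$ is positive, so only $x>ay$ survives) and for Types~3 and 4 (both rays of $\mathrm{cone}((1,1),(2,1))$ active, giving the two-sided condition) is correct and matches the paper's ``direct calculation.''

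There is, however, one concrete slip: you group Type~6 with Types~3 and 4, but by Theorem~\ref{thm:classNonFull} the semiample cone of Type~6 is $\mathrm{cone}((1,0),(\beta_m,1))$ (the $w_{ij}$ all equal $(1,0)$ and the $w_l=(\beta_l,1)$ span the other boundary ray), not the narrow cone between $(1,1)$ and $(2,1)$. With $-\mathcal{K}_X=\bigl(n+\sum_l\beta_l,\,m\bigr)$ the correct test is the single inequality $n+\sum_l\beta_l>m\beta_m$, i.e.\ $m\beta_m-\sum_l\beta_l<n$ --- the familiar Fano criterion for $\PP\bigl(\bigoplus_l\mathcal{O}_{\mathrm{Gr}(2,n)}(\beta_l)\bigr)$. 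Note that this agrees neither with your grouping nor with the tabulated entry $1<\frac{4+m}{n}<2$, which appears to be a duplicate of the Type~4 row; so for Type~6 your claim that the substitution ``reproduces exactly the tabulated inequalities'' cannot hold, and the honest conclusion of the computation is that the table's Type~6 entry needs to be replaced by $m\beta_m-\sum_l\beta_l<n$ (with equality for the truly almost Fano case). The remaining five rows are established by your argument as written.
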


\section{Proofs to Sections \ref{sec:intro} and \ref{sec:classification}}
This section is dedicated to the proofs of Theorems \ref{thm:classification}, \ref{thm:classNonFull} and Corollaries
\ref{cor:FanoClassification}, \ref{cor:NumberOfFanos}
and
\ref{cor:FanoNonFull}.
We begin with discussing the effective cone and the $X$-relevant faces in the case of Picard number two.
Recall that a variety $X$ is called \emph{$\QQ$-factorial} if every Weil divisor has a multiple that is Cartier, and $X$ is called
\emph{locally factorial} if all its points are factorial, which is equivalent to the property that every Weil divisor of $X$ is Cartier

\begin{remark}\label{rem:conesAndCo}
Let $X$ be a $\QQ$-factorial intrinsic 
Grassmannian of type $(2,n)$ with Picard number two.
Then the effective cone has a decomposition 
$$
\mathrm{Eff}(X) = \tau^+ \cup \tau_X \cup \tau^-,
$$
where $\tau_X := \mathrm{Ample}(X)$ is the ample cone and $\tau^+$ and $\tau^-$ are closed cones not intersecting $\tau_X$ such that
$\tau^+ \cap \tau^- = \left\{0\right\}$ holds.

\begin{center}
\begin{tikzpicture}[scale=0.6]
\path[fill=gray!60!] (0,0)--(4.5,3)--(2,4);
\draw (2.5,3) circle (0ex) node[below]{\small $\tau_X$};
\draw[-,thick] (0,0)--(4.5,3);
\draw[-,thick] (0,0)--(2,4);
\draw[-,thick] (0,0)--(5,1);
\draw[-,thick] (0,0)--(-2,4);
\draw(4,1) circle (0ex) 
node[above]{\small $\tau^+$};
\draw(0,2.5) circle (0ex)
node[above]{\small $\tau^-$};
\end{tikzpicture}   
\end{center}
Consider the $\QQ$-linear map corresponding to the $\ZZ$-linear map $Q$ defined in Construction~\ref{constr:relevantFaces}:
$$
Q \colon \QQ^{\binom{n}{2} + m} \rightarrow K_\QQ, \quad
e_{ij} \mapsto w_{ij} \in K_\QQ, \ e_l \mapsto w_l \in K_\QQ.
$$
As the ample cone $\tau_X$ is a subset of the relative interior of $\mathrm{Mov}(X)$, we obtain that each of the cones $\tau^+$ and $\tau^-$ contains at least two of the weights of $w_{ij}$ or $w_l$. 
Moreover, as $X$ is $\QQ$-factorial $\tau_X$ is two-dimensional and thus none of the weights $w_{ij}$ and $w_l$ lie inside $\tau_X$ due to Lemma~\ref{lem:XbarFaces}~(i).
In particular, if $\gamma_0 \preceq \gamma$ is a
two-dimensional $\overline{X}$-face whose primitive ray generators are projected onto the weights $w$ and $w'$, then $\gamma_0$ is $X$-relevant if and only if $w \in \tau^-$ and $w' \in \tau^+$ holds or vice versa.
\end{remark}

\begin{lemma}
Let $X$ be a locally factorial intrinsic Grassmannian of type $(2,n)$ with Picard number two. Then $\Cl(X) = \mathrm{Pic}(X) = \ZZ^2$ holds. 
\end{lemma}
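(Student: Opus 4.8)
The plan is to establish two separate statements: first that $\Cl(X) \cong \ZZ^2$ is torsion-free, and second that $\Pic(X) = \Cl(X)$ under the local factoriality hypothesis. For the first part, I would start from the fact that $X$ has Picard number two, so the free rank of $\Cl(X)$ is two, and write $\Cl(X) \cong \ZZ^2 \oplus \Gamma$ for a finite abelian torsion group $\Gamma$. The goal is to show $\Gamma = 0$. The natural tool is the \emph{almost free} condition from Construction~\ref{constr:intrinsGrassmannianDN}, which guarantees that any $m + \binom{n}{2} - 1$ of the weights generate $K = \Cl(X)$ as a group. Combined with local factoriality, which forces $\Pic(X) = \Cl(X)$, and the description of $\Pic(X)$ in Remark~\ref{rem:effAmpleEtc} as the intersection $\bigcap Q(\lin(\gamma_0) \cap \ZZ^{\binom{n}{2}+m})$ over relevant faces $\gamma_0$, I expect to deduce that the whole group $K$ is generated by weights lying in the linear span of a single relevant face.

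The key mechanism is as follows. By Lemma~\ref{lem:XbarFaces}~(iv), local factoriality means every $X$-relevant face $\gamma_0$ satisfies that $Q(\lin_\QQ(\gamma_0) \cap \ZZ^{\binom{n}{2}+m})$ generates $K$ as a group. Under local factoriality $\Pic(X) = \Cl(X) = K$, so the intersection in Remark~\ref{rem:effAmpleEtc} equals all of $K$, forcing each individual summand $Q(\lin(\gamma_0) \cap \ZZ^{\binom{n}{2}+m})$ to equal $K$ on the nose. Now in Picard number two the relevant faces are two-dimensional (as noted in Remark~\ref{rem:conesAndCo}), so $\lin_\QQ(\gamma_0)$ is a two-dimensional subspace spanned by two weight vectors; the condition that the associated lattice surjects onto $K$ then pins $K$ down to be generated by two elements as a lattice, which in particular forces the torsion part to vanish since a two-dimensional rational cone cannot carry a surjection onto $\ZZ^2 \oplus \Gamma$ with $\Gamma \neq 0$ coming from a rank-two sublattice unless that sublattice already surjects, and surjectivity of a rank-two free group onto $\ZZ^2 \oplus \Gamma$ forces $\Gamma = 0$ by comparing ranks after tensoring and counting the index.

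I would then conclude $\Cl(X) = \ZZ^2$, and having established this, the equality $\Pic(X) = \Cl(X)$ is precisely the content of local factoriality: every Weil divisor is Cartier, which is the definition of locally factorial recalled just before Remark~\ref{rem:conesAndCo}. So the second equality $\Pic(X) = \Cl(X)$ is essentially by hypothesis, translated through the standard identification of the Picard group with Cartier divisor classes inside the class group. The heart of the argument is really the torsion-freeness, which comes down to the combinatorics of Remark~\ref{rem:effAmpleEtc} together with Lemma~\ref{lem:XbarFaces}~(iv).

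The main obstacle I anticipate is the careful lattice-theoretic step showing that surjectivity of $Q$ restricted to a single two-dimensional relevant face forces the torsion of $K$ to vanish. One must verify that an $X$-relevant face genuinely exists (nonemptiness of the set of relevant faces, guaranteed since the moving cone is full-dimensional and $u$ sits in its relative interior), and that for such a face the sublattice $\lin_\QQ(\gamma_0)\cap \ZZ^{\binom{n}{2}+m}$ has rank exactly two so that its image generating $K$ immediately bounds the minimal number of generators of $K$ by two; a finitely generated abelian group of free rank two generated by two elements is necessarily torsion-free and isomorphic to $\ZZ^2$. Everything else, including the passage from ``locally factorial'' to ``$\Pic = \Cl$,'' is a direct invocation of the definitions and Remark~\ref{rem:effAmpleEtc}.
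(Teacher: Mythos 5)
Your overall strategy coincides with the paper's: use local factoriality to get $\Pic(X)=\Cl(X)$, use the description of $\Pic(X)$ in Remark~\ref{rem:effAmpleEtc} to realize $\Cl(X)$ as the image of the rank-two lattice $\lin_\QQ(\gamma_0)\cap\ZZ^{\binom{n}{2}+m}$ for a two-dimensional relevant face $\gamma_0$, and conclude that a finitely generated abelian group of free rank two generated by two elements is $\ZZ^2$. That final lattice-theoretic step is correct.

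The genuine gap is the existence of a \emph{two-dimensional} $X$-relevant face, which is exactly where the paper spends almost all of its effort and which you dispose of in one clause. Non-emptiness of $\rlv(X)$ (full-dimensionality of the moving cone, $u$ in its relative interior) only produces \emph{some} relevant face, and such a face may have dimension larger than two; for a face of dimension $d$ the lattice $\lin_\QQ(\gamma_0)\cap\ZZ^{\binom{n}{2}+m}$ has rank $d$, so its surjecting onto $K$ gives no bound on the number of generators of $K$. Your parenthetical claim that ``in Picard number two the relevant faces are two-dimensional, as noted in Remark~\ref{rem:conesAndCo}'' is not what that remark says: it only characterizes \emph{when} a two-dimensional $\overline{X}$-face is relevant (one weight in $\tau^+$, one in $\tau^-$); whether a given $\gamma_{i_1j_1,i_2j_2}$ is an $\overline{X}$-face at all is governed by Lemma~\ref{lem:XbarFaces}~(ii) and requires $|\{i_1,i_2,j_1,j_2\}|=3$, a genuine combinatorial constraint coming from the Pl\"ucker relations. (The assertion that all \emph{minimal} relevant faces are two-dimensional is Lemma~\ref{lem:Pic2Weights}~(ii), which appears later and is proved only for smooth $X$.) The paper closes this gap by a case distinction: for $m=0$, if no two-dimensional $\overline{X}$-face $\gamma_{i_1j_1,i_2j_2}$ were relevant, then all weights would lie in a single one of $\tau^{\pm}$, contradicting the fact that each of $\tau^+$ and $\tau^-$ contains at least two weights; for $m\geq 1$ the faces $\gamma_{l,ij}$ and $\gamma_{l_1,l_2}$ are always $\overline{X}$-faces by Lemma~\ref{lem:XbarFaces}~(i), so the weight $w_1\in\tau^+$ together with any weight in $\tau^-$ does the job. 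A secondary slip: you invoke Lemma~\ref{lem:XbarFaces}~(iv) to say local factoriality forces $Q(\lin_\QQ(\gamma_0)\cap\ZZ^{\binom{n}{2}+m})=K$; that item is a smoothness criterion, and the correct (and sufficient) route is the one you also mention, namely $\Pic(X)=\Cl(X)$ combined with Remark~\ref{rem:effAmpleEtc}.
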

\begin{proof}
As $X$ is locally factorial we have $\mathrm{Pic}(X) = \Cl(X)$
and it suffices to show that $\mathrm{Pic}(X)$ is torsion free. Using Remark \ref{rem:effAmpleEtc} we obtain
$\mathrm{Pic}(X) \subseteq Q(\mathrm{lin}(\gamma_0) \cap \ZZ^{\binom{n}{2} + m})$, where $\gamma_0$ is a relevant face. In particular, in order to complete the proof we are left with finding a two-dimensional $X$-relevant face $\gamma_0 \preceq \gamma$.
We note that none of the weights $w_{ij}$ and $w_l$ lie in $\tau_X$.

Assume we have $m = 0$ and there exists no two-dimensional $X$-relevant face $\gamma_{i_1j_1, i_2j_2}$.
Then due to Lemma~\ref{lem:XbarFaces}~(ii)
either all weights lie in $\tau^+$ or in $\tau^-$.
This contradicts the fact that there are at least two weights in each of $\tau^+$ and $\tau^-$.

Now let $m \geq 1$ hold. We may assume $w_1 \in \tau^+$.
Due to Remark \ref{rem:conesAndCo} there is at least one weight $w_{ij}$ or $w_l$ that lies in $\tau^-$
and we obtain a two-dimensional $X$-relevant face $\gamma_{1, ij}$ or $\gamma_{1,l}$.
\end{proof}

\begin{remark}\label{rem:conesAndCo2}
Let $X$ be a smooth intrinsic Grassmannian of type $(2,n)$
with Picard number two. Then due to the above lemma we have $\Cl(X) = \ZZ^2$ and we write
$$
w_{ij} = (x_{ij}, y_{ij}) := \mathrm{deg}(T_{ij}) \in \ZZ^2
\qquad
w_{l} = (x_{l}, y_l) := \mathrm{deg}(S_l) \in \ZZ^2
$$
for the weights of the generators.
In the notation of Remark \ref{rem:conesAndCo}
consider any two-dimensional $X$-relevant face $\gamma_0 \preceq \gamma$
and denote with $w, w' \in K_\QQ$ the images of the primitive ray generators of $\gamma_0$ via $Q$. Then we may assume $w \in \tau^+$ and $w' \in \tau^-$ and due to Lemma \ref{lem:XbarFaces} we have 
$\mathrm{det}(w,w') = 1$. Thus applying a suitable coordinate change on $\ZZ^2$ we obtain 
$$w = (1,0)\quad \text{and} \quad w' = (0,1).$$
Moreover, in this situation we have $w'' = (x'',1) \in K_\QQ$ whenever $w, w''$ are the images of the primitive ray generators of
a two-dimensional $X$-relevant face via $Q$
and we have $w'' = (1, y'') \in K_\QQ$ whenever $w'', w'$ are the images of the primitive ray generators of
a two-dimensional $X$-relevant face via $Q$.
\end{remark}

\begin{lemma}\label{lem:Pic2Weights}
Let $X$ be a smooth intrinsic Grassmannian of type $(2,n)$ with Picard number two
arising via Construction \ref{constr:intrinsGrassmannian}.
Then the following statements hold:
\begin{enumerate}
    \item 
    The weights $w_l$, where $1 \leq l \leq m$, lie either all in $\tau^+$ or in~$\tau^-$.
    \item
    All minimal $X$-relevant faces $\gamma_0 \preceq \gamma$ with respect to inclusion are two-dimensional.
\end{enumerate}
\end{lemma}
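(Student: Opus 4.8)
The plan is to derive both assertions from the smoothness criterion of Lemma~\ref{lem:XbarFaces}~(iv), which forces every $X$-relevant face to contain at least one ray of the form $\gamma_{ij}$. For (i), I would argue by contradiction: suppose some $w_{l_1}$ lies in $\tau^+$ and some $w_{l_2}$ in $\tau^-$. By Lemma~\ref{lem:XbarFaces}~(i) the two-dimensional face $\gamma_{l_1,l_2}$ is an $\overline{X}$-face, and since its ray images $w_{l_1},w_{l_2}$ sit in $\tau^+$ and $\tau^-$ respectively, Remark~\ref{rem:conesAndCo} shows it is $X$-relevant. But $\gamma_{l_1,l_2}$ contains no ray $\gamma_{ij}$, so Lemma~\ref{lem:XbarFaces}~(iv) makes the piece $X(\gamma_{l_1,l_2})$ singular, contradicting smoothness. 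Hence all $w_l$ lie on one side, and after possibly swapping $\tau^+$ and $\tau^-$ we may assume they all lie in $\tau^+$.

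For (ii), I would first rule out faces of dimension at most one: the origin and each ray $\gamma_{ij}$ or $\gamma_l$ map under $Q$ to $\{0\}$ or to a one-dimensional ray, and since no weight lies in $\tau_X$ by Remark~\ref{rem:conesAndCo}, none of these relative interiors contains the interior point $u \in \tau_X^\circ$. Thus no such face is $Z$-relevant, so every $X$-relevant face has dimension at least two. It then suffices to show that an arbitrary $X$-relevant face $\gamma_0$ contains a two-dimensional $X$-relevant subface, as minimality will then force $\gamma_0$ itself to be two-dimensional. Write $S$ for the set of index pairs $(i,j)$ with $\gamma_{ij}\preceq\gamma_0$, and $S^{\pm}\subseteq S$ for those with $w_{ij}\in\tau^{\pm}$. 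Since $u\in\relint(Q(\gamma_0))$ while every ray of $\gamma_0$ maps into $\tau^+\cup\tau^-$, the face $\gamma_0$ must have rays mapping into both $\tau^+$ and $\tau^-$.

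If $\gamma_0$ contains a ray $\gamma_l$, then by (i) its image $w_l$ lies in $\tau^+$, and since $u$ lies in the interior there must be some $\gamma_{ij}\preceq\gamma_0$ with $w_{ij}\in\tau^-$; the face $\gamma_{l,ij}$ is then an $\overline{X}$-face by Lemma~\ref{lem:XbarFaces}~(i) and $X$-relevant by Remark~\ref{rem:conesAndCo}, giving the desired two-dimensional subface. Otherwise all rays of $\gamma_0$ are of the form $\gamma_{ij}$, so $S^+$ and $S^-$ are both nonempty, and the task reduces to finding $(i_1,j_1)\in S^+$ and $(i_2,j_2)\in S^-$ with $|\{i_1,j_1,i_2,j_2\}|=3$; by Lemma~\ref{lem:XbarFaces}~(ii) the face $\gamma_{i_1j_1,i_2j_2}$ is then an $\overline{X}$-face, hence two-dimensional and $X$-relevant.

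This last combinatorial step is where I expect the real work to lie. I would argue by contradiction, assuming every pair in $S^+$ is disjoint, as an index set, from every pair in $S^-$. Picking $(a,b)\in S^+$ and $(c,d)\in S^-$, the four indices are then distinct, and since any $z\in\overline{X}(\gamma_0)$ (nonempty, as $\gamma_0$ is an $\overline{X}$-face) has support exactly $S$ and satisfies the Pl\"ucker relation $g_{\{a,b,c,d\}}$, the nonvanishing term $z_{ab}z_{cd}$ forces one of the remaining matchings $\{(a,c),(b,d)\}$ or $\{(a,d),(b,c)\}$ to consist of two pairs lying in $S$. Each such pair shares exactly one index with $(a,b)$ and one with $(c,d)$; sorting it into $S^+$ or $S^-$ produces a cross pair sharing a single index, contradicting the disjointness assumption. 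This yields the two-dimensional $X$-relevant face below $\gamma_0$ and completes the argument.
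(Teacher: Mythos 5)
Your proposal is correct and follows essentially the same route as the paper: part (i) is the identical contradiction via the singular piece $X(\gamma_{l_1,l_2})$, and part (ii) likewise rules out low-dimensional relevant faces, disposes of rays $\gamma_l$ via (i), and uses the three-term Pl\"ucker relation on a disjoint cross pair to manufacture a two-dimensional $X$-relevant subface. The only cosmetic difference is that you phrase the combinatorial core as a global disjointness assumption on $S^+\times S^-$ rather than working with specific chosen pairs as the paper does.
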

\begin{proof}
We proof (i).
Due to Remark \ref{rem:conesAndCo}
a weight of the form $w_l$ lies either in $\tau^+$ or in $\tau^-$. Now assume we have $w_{l_1} \in \tau^+$ and $w_{l_2} \in \tau^-$. Then $\gamma_{l_1,l_2}$ is an $X$-relevant face but the corresponding piece $X(\gamma_{l_1,l_2})$ is singular due to Lemma~\ref{lem:XbarFaces}~(iv); a contradiction to smoothness of $X$.

We turn to (ii).
As $X$ is $\QQ$-factorial and of Picard number two there are no one-dimensional $X$-relevant faces.
Assume there is a minimal $X$-relevant face $\gamma_0$ of dimension at least three that does not contain any two-dimensional $X$-relevant face.
Using (i) and Lemma~\ref{lem:XbarFaces}~(i) this implies $e_l \notin \gamma_0$ for all $1 \leq l \leq m$. We obtain
$$
\gamma_0 = \mathrm{cone}(e_{i_1j_1}, \ldots, e_{i_kj_k}) \text{ for some } k \geq 3
$$
and after renumbering 
we may assume $w_{i_1j_1} \in \tau^+$
and $ w_{i_2j_2},w_{i_3j_3}\in \tau^-$.
As $\gamma_0$ does not contain any two-dimensional $X$-relevant face we conclude
$i_1, j_1 \notin \left\{i_2, j_2, i_3,j_3\right\}$.
This implies that
$T_{i_1j_1} T_{i_2j_2}$ occurs as a term in
one of the relations $g_I$. As
$\overline{X}(\gamma_0) \neq \emptyset$
holds
we conclude
\begin{equation}\label{equ2}
e_{i_1,i_2}, e_{j_1,j_2} \in \gamma_0
\quad 
\text{or}
\quad
e_{i_1, j_2}, e_{i_2, j_1} \in \gamma_0.
\end{equation}
Here, we identify $e_{ij}$ with $e_{ji}$ if necessary.
Assume the former holds and $w_{i_1i_2} \in \tau^+$ holds. Then $\gamma_{i_1i_2, i_2j_2}$ is a two-dimensional $X$-relevant face contained in $\gamma_0$; a contradiction.
So assume $w_{i_1i_2} \in \tau^-$. 
Then the two-dimensional $X$-relevant face $\gamma_{i_1j_1,i_1i_2}$ is contained in $\gamma_0$.
Again a contradiction. The same arguments work for 
the latter case in (\ref{equ2}) as well.
\end{proof}

\begin{remark}\label{rem:permutation}
Let $R:=R(n,m,Q)$ be any $K$-graded ring as in 
Construction~\ref{constr:intrinsGrassmannian}
and let $\sigma \in S_n$ be any permutation.
Then we have an isomorphism of graded rings
$$
R(n,m,Q) \rightarrow R(n,m,Q'), \qquad
T_{ij} \mapsto
T_{\sigma(i), \sigma(j)},
\quad
S_l \mapsto S_l
$$
where the $ij$-th column $w'_{ij}$ of $Q'$
equals
$w_{\sigma^{-1}(i) \sigma^{-1}(j)}$.
Note that in the above we identify
$T_{ij}$ with $T_{ji}$ and $w_{ij}$ with $w_{ji}$.
\end{remark}

\begin{lemma}\label{lem:deleteVariables}
Let $X$ be a smooth full intrinsic Grassmannian 
of Picard number two
defined by its Cox ring $R(n,0,Q)$, where $n \geq 5$ holds, and an ample class $u \in K$. 
Then after suitably permuting variables as in Remark \ref{rem:permutation} the variety 
${X_n := \overline{X}_n^{ss}(u) / \!\! / H}$
is a smooth full intrinsic Grassmannian 
of type $(2, n-1)$,
where
$$
\overline{X}_n := \mathrm{Spec}(R(n-1,0,Q'))
$$
and $Q'$ is the $2 \times \binom{n-1}{2}$ matrix
whose $ij$-th column $w_{ij}'$ equals the $ij$-the column $w_{ij}$ of $Q$
for $1 \leq i < j \leq n-1$.
\end{lemma}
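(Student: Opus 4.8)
The plan is to combine the normal form for the grading coming from Lemma~\ref{lem:grading} with the smoothness criterion of Lemma~\ref{lem:XbarFaces}(iv), so that the Cox-ring structure and smoothness of $X_n$ are inherited almost for free, and the only genuine work is to choose the deleted index so as to keep $u$ ample and the moving cone full-dimensional. First I would record the grading in normal form. Since $X$ is smooth it is locally factorial, so by the preceding lemma $K=\Cl(X)=\ZZ^2$ is torsion free; hence the computation in the proof of Lemma~\ref{lem:grading} produces elements $v_1,\dots,v_n,c\in K$ with
\[
w_{ij}\;=\;v_i+v_j+c\qquad(1\le i<j\le n).
\]
Deleting an index $p$ and keeping the weights $w_{ij}$ with $i,j\neq p$ therefore yields a grading on $R(n-1,0)$ of exactly the same shape $v_i+v_j+c$; by the direct computation (each monomial of $g_I$ acquires $\Cl$-degree $v_a+v_b+v_c+v_d+2c$) this leaves every Plücker relation of $\aGr(2,n-1)$ homogeneous. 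Moreover $\Eff(X_n)=Q'(\gamma')=\cone(w_{ij};\ i<j\le n-1)$ is a subcone of the pointed cone $\Eff(X)$, hence pointed. Thus, once $p$ is fixed, the remaining conditions of Construction~\ref{constr:intrinsGrassmannianDN} to check are almost freeness and full-dimensionality of the moving cone, together with $u\in\Ample(X_n)^\circ$ and smoothness of $X_n$.

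Next I would treat smoothness, which is inherited independently of the choice of $p$. Normalising coordinates as in Remark~\ref{rem:conesAndCo2}, I observe that a two-dimensional face $\gamma_{i_1j_1,i_2j_2}$ with all indices in $\{1,\dots,n-1\}$ is an $\overline{X}_n$-face, resp.\ is $Z_n$-relevant, under exactly the same conditions (sharing a single index by Lemma~\ref{lem:XbarFaces}(ii), resp.\ $u\in\relint Q(\gamma_0)$) as those characterising $\overline{X}$-faces and $Z$-relevance for $X$. As $Q'$ agrees with $Q$ on these weights and $u$ is unchanged, the set of $X_n$-relevant two-faces equals the set of $X$-relevant two-faces avoiding column $p$. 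Each of these, being $X$-relevant with $X$ smooth, has $\det(w_{i_1j_1},w_{i_2j_2})=\pm1$ by Lemma~\ref{lem:XbarFaces}(iv), so its weights generate $K$. Once $X_n$ is known to be $\QQ$-factorial of Picard number two, the argument of Lemma~\ref{lem:Pic2Weights}(ii)—which for $m=0$ uses only $\QQ$-factoriality and the combinatorics of the $g_I$—shows every minimal $X_n$-relevant face is two-dimensional; hence every $X_n$-relevant face contains such a unimodular two-face and therefore satisfies condition~(a) of Lemma~\ref{lem:XbarFaces}(iv), while condition~(b) is automatic in the full case. This gives smoothness of $X_n$.

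The main obstacle is the choice of the index $p$: deleting the whole column $\{w_{ip};\ i\neq p\}$ removes $n-1$ weights at once, and I must guarantee that this neither expels $u$ from the ample cone nor collapses the moving cone to a ray. By Remark~\ref{rem:conesAndCo} each of $\tp$ and $\tm$ contains at least two weights and $u$ lies strictly between them. I would order the indices by the two linear functionals supporting the rays of $\tx$ and take $p$ to be an extremal index, for which the weights $w_{ip}$ are concentrated toward the outer ray of $\Eff(X)$ on one side; I then verify that after removing column $p$ at least two weights still survive in each of $\tp$ and $\tm$, and that the two weights whose rays bound the moving cone on the opposite side are untouched. This keeps $\Mov(X_n)$ two-dimensional with $u$ in its relative interior, so that $X_n$ is $\QQ$-factorial of Picard number two with $u\in\Ample(X_n)^\circ$, and it forces almost freeness, since after deleting any single further weight one still finds a unimodular $X_n$-relevant pair generating $K$. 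The delicate point, which I expect to cost the most effort, is to prove that for $n\ge5$ such an extremal index always exists—i.e.\ that the weights pinned to the boundary rays of $\tx$ can never all lie in a single column—and this is precisely where the hypothesis $n\ge5$ supplies the room needed to spare a column.
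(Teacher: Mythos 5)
Your overall architecture matches the paper's: homogeneity of the smaller Plücker ideal and smoothness of $X_n$ are indeed inherited essentially for free (the paper argues exactly as you do, that every $X_n$-relevant face is an $X$-relevant face and hence defines a smooth piece), and the entire substance of the lemma is the existence of an index $p$ whose column can be deleted while leaving at least two weights in each of $\tau^+$ and $\tau^-$. But that is precisely the step you do not prove. You propose to "order the indices by the two linear functionals supporting the rays of $\tau_X$ and take $p$ to be an extremal index," and then you explicitly defer the verification, conceding that "the delicate point\dots is to prove that for $n\ge5$ such an extremal index always exists." Since this existence statement is the content of the lemma, the proposal as written has a genuine gap; moreover it is not clear that the extremal-index heuristic is even the right mechanism, since the obstruction is combinatorial (a single column meeting one of $\tau^+$, $\tau^-$ in all but one of its weights) rather than metric.

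The paper closes this gap with a short counting argument that your sketch misses. Assume $\tau^+$ contains at most as many weights as $\tau^-$, pick any two weights $w_{i_1j_1},w_{i_2j_2}\in\tau^+$, and choose $l\notin\{i_1,j_1,i_2,j_2\}$ (possible since $n\ge5$); then both chosen weights survive in $\tau^+$, and since the column of $l$ contains only $n-1$ weights while $\tau^-$ contains at least $\tfrac12\binom{n}{2}$, the inequality $\tfrac12\binom{n}{2}-(n-1)\ge2$ settles every case with $n\ge6$. The case $n=5$ genuinely needs a separate pigeonhole argument (when both cones contain exactly five weights, one finds an index $l$ whose column meets both $\tau^+$ and $\tau^-$, so deleting it removes at most three weights from each side). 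Your write-up contains neither the counting estimate nor the $n=5$ special case, so the proposal cannot be accepted as a proof without supplying this argument. A minor additional remark: your appeal to Lemma~\ref{lem:grading} to re-derive homogeneity of the relations of $I_{2,n-1}$ is unnecessary, since these relations are literally the $g_I$ with $n\notin I$ and are therefore already homogeneous for the restricted grading.
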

\begin{proof}
The proof runs in two steps: In the first step we show that there exists an $1 \leq l \leq n$ such that there are at least
two weights $w_{ij}$ with $l \notin \left\{i,j\right\}$ in each of $\tau^+$ and $\tau^-$. After a suitable permutation of variables as in Remark \ref{rem:permutation} we may assume $l = n$ and consider the corresponding ring $R(n-1,0, Q')$. 
We then show in the second step that the corresponding variety $X_n$ is indeed smooth.

We may assume that $\tau^+$ contains less weights than $\tau^-$. 
We prove the assertion for $n \geq 6$. Let $w_{i_1j_1}, w_{i_2j_2} \in \tau^+$ be any two weights and choose 
$l \in \left\{1,\ldots, n\right\} \setminus \left\{i_1,j_1, i_2,j_2\right\}$. Note that there are exactly $n-1$ weights $w_{ij}$ with $l \in \left\{i,j\right\}$.
As $\frac{1}{2}\binom{n}{2} - (n-1) \geq 2$
holds for $n \geq 6$ we conclude that there are still at least two weights $w_{ij}$ with $l \notin \left\{i,j\right\}$
left in $\tau^-$.

We come to $n =5$. Here the above choice works as long as $\tau^-$ contains more than $6$ weights. So assume both $\tau^+$ and $\tau^-$ contain exactly $5$ weights. 
In this case there exists at least one $1 \leq l \leq 5$ 
such that there are weights $w_{i_1j_1} \in \tau^+$, $w_{i_2j_2} \in \tau^-$ with $l \in \left\{i_1, j_1\right\}$ and $l \in \left\{i_2, j_2 \right\}$. In particular there are at least two weights $w_{ij}$ with $l \notin \left\{i,j\right\}$ in each of $\tau^+$ and $\tau^-$.

We are left with proving that $X_n$ is smooth.
Due to the structure of the relations any $X_n$-relevant face
$\gamma \in \rlv(X_n)$ is $X$-relevant,
where in the latter case we regard $\gamma$ as a cone in the
ambient space $\QQ^{\binom{n}{2}}$. In particular, 
as $X_n$ is already quasismooth, any $\gamma \in \mathrm{rlv}(X)$ defines a smooth piece in $X_n$.
This completes the proof.
\end{proof}

\begin{remark}\label{rem:type24}
Let $X$ be a smooth full intrinsic Grassmannian of type $(2,4)$. Then $X$ is a smooth full intrinsic quadric, which were classified in \cite{FaHa} for small Picard numbers. Their results show that there is exactly one smooth full intrinsic Grassmannian of type $(2,4)$ with Picard number two: We have 
$\mathcal{R}(X) = R(4,0,Q)$ with 
$$
Q:= [w_{12}, w_{13}, w_{14}, w_{23},w_{24}, w_{34}]
=
\begin{bmatrix}
1 & 1 & 0 & 1 & 0 & 0 
\\
0 & 0 & 1 & 0 & 1 & 1
\end{bmatrix}
$$
and the ample cone equals the positive orthant. 
\end{remark}

\begin{proof}[Proof of Theorem \ref{thm:classification}]
Remark \ref{rem:type24} proves the assertion in the case $n = 4$. Now, assume that any smooth full intrinsic Grassmannian of type~$(2,n-1)$
that has Picard number two
is as described in Theorem \ref{thm:classification}.
We show that this implies that any smooth full intrinsic Grassmannian of type $(2,n)$ with Picard number two is as well as claimed.

For this, let $X$ be 
any smooth full intrinsic Grassmannian of type $(2,n)$ that has Picard number two. 
Then due to Lemma \ref{lem:deleteVariables}
after suitably permuting the variables 
the variety $X_n$ is a smooth intrinsic Grassmannian 
of type $(2,n-1)$
that has Picard number two.
Thus the weights $w_{ij}$ with $j \neq n$
are of the form described in 
Theorem~\ref{thm:classification}.
Moreover, due to homogeneity of the relations 
we have
$$
w_{1n} + w_{23} = w_{2n} + w_{13} + w_{3n} + w_{12}.
$$
Using $w_{12} = w_{13} = w_{23}$
we conclude
$w_{1n} = w_{2n} = w_{3n}$.
Again homogeneity of the relations forces
$$
w_{in} =
w_{1i} + w_{2n} - w_{12}
= w_{1i} + w_{1n} - (1,0).
$$
Thus it is only left to show that
$w_{1n}$ lies either in $\tau^+$ and equals $(1,0)$ or 
lies in $\tau^-$ and is of the
form $(\alpha_n,1)$ for $\alpha_n \in \ZZ$. 
Then suitably renumbering and applying a suitable coordinate change on $\ZZ^2$ the weights and the ample cone are as claimed.

We assume $w_{1n} \in \tau^-$.
Then
$\gamma_{12, 1n} \in \mathrm{rlv}(X)$
leads to 
$$
1 = \mathrm{det}(w_{12},w_{1n}) = y_{1n} = y_{2n} = y_{3n}.
$$
Thus setting $\alpha_n := x_{1n}$
the grading is in the desired form.

Now assume $w_{1n} \in \tau^+$. 
Then
$\gamma_{1k,1n} \in \mathrm{rlv}(X)$
leads to
$
1 = \mathrm{det}(w_{1n},w_{1k}) = 
x_{1n}.
$
We distinguish between the
case $\alpha_{n-1} > 0$ and the 
case $\alpha_{n-1} = 0$.

Assume $\alpha_{n-1} > 0$.
Then $\gamma_{1n-1, 1n} \in \mathrm{rlv}(X)$
leads to
$$
1 =
\mathrm{det}(w_{1n}, w_{1n-1}) = 
1 - y_{1n} \cdot \alpha_{n-1}.
$$
Therefore $y_{1n} = 0$ holds
and the grading is as wanted. 

Now assume $\alpha_{n-1} = 0$.
Due to homogeneity of the relations we have
$$
x_{n-1n}
= 
x_{1n-1} + x_{2n} - x_{12} 
=
0.
$$
This implies $w_{n-1n} \in \tau^-$.
Now, $\gamma_{1n,n-1n} \in \mathrm{rlv}(X)$ leads to
$
1 =
\mathrm{det}(w_{1n},w_{n-1n}) 
= y_{n-1n}
$
and homogeneity of the relations implies
$$
(1,0) 
=
w_{n-1n}+ w_{12} - w_{2n-1}
= w_{1n}.
$$
Again we are in the desired form.

In order to complete the proof it is only left to show that the constellation in Theorem \ref{thm:classification} defines smooth full intrinsic Grassmannians of type $(2,n)$. One directly checks that the weights $w_{ij}$ leave the Plücker relations homogeneous and thus define a grading on $R(n,0)$ as wanted.
In order to show that $X$ is indeed smooth it suffices to show that each minimal face $\gamma_0 \in \mathrm{rlv}(X)$ defines a smooth piece of $X$.
The minimal $X$-relevant faces are precisely the two-dimensional ones. Using Lemma \ref{lem:XbarFaces}~(ii) we obtain that a face $\gamma_{i_1j_1, i_2j_2}\preceq \gamma$
is an $\overline{X}$-face if and only if 
$|\left\{i_1, i_2, j_1, j_2\right\}| = 3$ holds. Moreover,
the $\overline{X}$-face 
$\gamma_{i_1j_1, i_2j_2}$ is $X$-relevant if and only if
after suitably renumbering
$w_{i_1j_1} \in \tau^+$ and $w_{i_2j_2} \in \tau^-$ holds.
Thus the constellation of weights given in Theorem~\ref{thm:classification} forces 
$i_1< j_1 < k \leq j_2$ and we have either 
$i_2 = j_1$ or $i_2 = i_1.$
With
$\mathrm{det}((1,0), (\alpha_{j_2}, 1)) = 1$
the assertion follows. 
\end{proof}

\begin{proof}[Proof of Theorem \ref{thm:classNonFull}]
Let $X$ be any smooth intrinsic Grassmannian of type $(2,n)$ with Picard number two.
Due to Remark \ref{rem:conesAndCo} and Lemma~\ref{lem:Pic2Weights} we have one of the following constellations of weights in the effective cone:

\vspace{2mm}
\noindent
\emph{Case 1: 
We have $m \geq 0$ and there are at least two weights of the form $w_{ij}$ in each of $\tau^+$ and $\tau^-$.}

\noindent
In this situation the weights $w_{ij}$ are of the form as in Theorem \ref{thm:classification}.
If $m = 0$ holds, we are in Type~1. So assume $m> 0$.
We distinguish between the following two cases:

\vspace{1mm}
\noindent
\emph{Case 1.1: We have $w_l \in \tau^+$ for all $1 \leq l \leq m$.}

\noindent
Applying Remark \ref{rem:conesAndCo2} to $\gamma_{l, 1k} \in \rlv(X)$ leads to 
$x_l = 1$. Assume $y_l = 0$ holds for at least one $1 \leq l \leq m$. 
We claim that this forces $k = n$. Assume not.
Then we have
$$
w_{n-1,n} 
=
(\alpha_{n-1} + \alpha_n -1, 2).
$$
But $\mathrm{det}(w_l,w_{n-1,n}) = 2$
contradicts smoothness of $X$ as
$\gamma_{l, n-1n} \in \rlv(X)$.
Thus we have $k = n$ and suitably renumbering the free variables and applying a suitable coordinate change on $\ZZ^2$ we end up in Type~2.

So assume $y_l \neq 0$ for all $1 \leq l \leq m$. 
In this case $\gamma_{l, 1j} \in \rlv(X)$ for $j \geq k$ forces
$\alpha_k = \ldots = \alpha_n = 0$.
If $k = n$ holds, we are again in Type~2.
So assume $k < n$. 
Then
$\gamma_{l, n-1n} \in \mathrm{rlv}(X)$ yields
$$
1 = \mathrm{det}(w_{l}, w_{n-1,n}) = 2 + y_l.
$$
and thus $y_l = -1$ for all $1 \leq l \leq m$. 
Applying the coordinate change on $\ZZ^2$ that sends $(1,-1)$ to $(1,0)$ and
$(-1,2)$ to $(0,1)$ we end up in Type~3.

\vspace{1mm}
\noindent
\emph{Case 1.2: We have $w_l \in \tau^-$ for all $1 \leq l \leq m$}

\noindent
In this case, applying Remark \ref{rem:conesAndCo2} to $\gamma_{l, 12} \in \rlv(X)$ leads to 
$y_l = 1$. We set $\beta_l:=x_l$ 
for $1 \leq l \leq m$. 
Applying a suitable unimodular coordinate change we may assume $\beta_l \geq 0$ holds for all
$1 \leq l \leq m$ and $(0,1)$ occurs among the weights
$w_{ij}$ and $w_l$. 
Now suitably renumbering the free variables we obtain
$0 \leq \beta_1 \leq \ldots \leq \beta_m$ and we are in the situation of Type~1.

\vspace{2mm}
\noindent
\emph{Case 2: 
We have $m \geq 1$, 
with $w_l \in \tau^+$ for $1 \leq l \leq m$ and there is exactly one more weight $w_{ij}$ in $\tau^+$.}

\noindent
After suitably permuting variables as in Remark \ref{rem:permutation} we may assume $w_{12}, w_l \in \tau^+$, where $1 \leq l \leq m$. 
Now, applying Remark \ref{rem:conesAndCo2} to $\gamma_{1,13} \in \mathrm{rlv}(X)$ and afterwards to $\gamma_{1,ij}\in \mathrm{rlv}(X)$,
where $(1,2) \neq (i,j) \neq (1,3)$ 
we obtain
$$
w_{1} =(1,0), \ w_{13} = (0,1) \ \text{ and } y_{ij} = 1 \text{ whenever } j \geq 3
$$
Then $w_{12} + w_{34} = w_{13} + w_{24}$
and $\gamma_{12,13} \in \rlv(X)$ 
yields $w_{12} = (1,1)$.
Using 
$\gamma_{12,ij} \in \rlv(X)$ for 
$i \leq 2 <j$ 
we conclude
$w_{ij} = (0,1)$ for $i \leq 2 <j$.
Using homogeneity of the relations we obtain
$$
w_{ij} = w_{1i} + w_{2j} - w_{12} = (-1,1)
\text{ whenever } i \geq 3.
$$
Applying a suitable coordinate change on $\ZZ^2$ we end up in Type~4.

\vspace{2mm}
\noindent
\emph{Case 3: We have $m \geq 2$ with $w_l \in \tau^+$ for $1 \leq l \leq m$ and $w_{ij} \in \tau^-$ for all $(i,j)$.}

\noindent
Applying Remark \ref{rem:conesAndCo2}
to $\gamma_{1,12} \in \mathrm{rlv}(X)$ and 
afterwards to $\gamma_{1,ij} \in \rlv(X)$ and 
$\gamma_{l,12} \in \rlv(X)$, where $(i,j) \neq (1,2)$ and
$1 \leq l \leq m$,
we obtain
$$
w_{12} = (0,1), \ w_{1} = (1,0),
\ 
x_l = 1 \text{ and } y_{ij} = 1.
$$
We distinguish between the following cases:

\vspace{1mm}
\noindent
\emph{Case 3.1: We have $y_l = 0$ for all $1 \leq l \leq m$.}

\noindent
In this case set $\alpha_j := x_{3j}$ for $j \geq 4$. Then 
homogeneity of the relations forces
$$
(\alpha_j, 2) = w_{12} +w_{3j} = w_{13} + w_{2j} = w_{23} + w_{1j}
$$
and $w_{ij} = w_{1i} + w_{2j} - (0,1)$.
Applying a suitable coordinate change on $\ZZ^2$ we may assume $x_{ij} \geq 0$ for all $1 \leq i < j \leq n$ and at least 
one $x_{ij}$ equals $0$. 
Applying a suitable permutation of variables
as in Remark \ref{rem:permutation} we may thus assume 
$0 = x_{12} \leq x_{13} \leq x_{23}$ and we are in Type~5.

\vspace{1mm}
\noindent
\emph{Case 3.2: There exists $1 \leq l \leq m$ with $y_l \neq 0$.}

\noindent
In this case $\gamma_{l, ij} \in \rlv(X)$
forces $w_{ij} = (0,1)$ for all $1 \leq i < j \leq n$.
Suitably renumbering the free variables we may furthermore assume 
$y_1 \leq y_2 \leq \ldots \leq y_m$.
Moreover, applying a suitable coordinate change on $\ZZ^2$ we
obtain $0 = y_1 \leq \ldots \leq y_m$ and 
we are in Type~6.

We complete the proof by showing that indeed all 
constellations in Theorem \ref{thm:classNonFull} define smooth intrinsic Grassmannians of type $(2,n)$. One directly checks that the weights $w_{ij}$ leave the Plücker relations homogeneous and thus define a grading on $R(n,m)$ as wanted.
We show that the minimal $X$-relevant faces, that means the two-dimensional ones, define a smooth piece of $X$. We go through the six types:

\vspace{1mm}
\noindent
\emph{Type~1:}
Here the two-dimensional $X$-relevant faces are either of the form
$\gamma_{i_1j_1,l}$ with $j_1 < k$ or of the form
$\gamma_{i_1j_1,i_2j_2}$ 
with
$j_1 < k \leq \ j_2$
and
$i_2 \in \left\{i_1, j_1\right\}$.
Using Lemma \ref{lem:XbarFaces}~(iv), we conclude that these faces define smooth pieces of $X$ as $w_{i_1j_1} = (1,0)$, $w_l = (\beta_l,1)$ and $w_{i_2,j_2} = ( \alpha_{j_2},1)$ holds.

\vspace{1mm}
\noindent
\emph{Type~2:}
Here the two-dimensional $X$-relevant faces are either of the form
$\gamma_{i_1n,l}$ with $i < n$ or of the form
$\gamma_{i_1n,i_2j_2}$ with $i_1,i_2,j_2 < n$
and $i_2 = i_1$ or $j_2 = i_1$.
These faces define smooth pieces of $X$ as $w_{i_1n} = (1,0)$, $w_l = (\beta_l,1)$ and $w_{i_2j_2} = ( \alpha,1)$ holds. 

\vspace{1mm}
\noindent
\emph{Type~3:}
Here the two-dimensional $X$-relevant faces are either of the form
$\gamma_{l,ij}$ with $k \leq j$ or of the form
$\gamma_{i_1j_1,i_2j_2}$ with 
$j_1 < k \leq j_2$ and
$i_2 = i_1$ or $i_2 = j_1.$
The former face defines a smooth piece of $X$
as $w_l = (1,0)$ holds and $w_{ij}$ equals either $(1,1)$ or $(0,1)$.
The latter face defines a smooth piece of $X$ 
as $w_{i_1j_1} = (2,1)$ and 
$w_{i_2j_2} = (1,1)$ holds. 

\vspace{1mm}
\noindent
\emph{Type~4:}
Here the two-dimensional $X$-relevant faces are either of the form
$\gamma_{l, ij}$ with $(i,j) \neq (1,2)$ or of the form
$
\gamma_{12,ij}$
with $i \in \left\{1,2\right\}$.
The former face defines smooth pieces of $X$ as $w_{l} = (1,0)$ and $w_{ij}$ equals either $(1,1)$ or $(0,1)$. The latter face defines a smooth piece of $X$ as $w_{12} = (2,1)$ holds and $w_{ij}$ with
$i \in \left\{1,2\right\}$ equals $(1,1)$.

\vspace{1mm}
\noindent
\emph{Type~5:}
Here the two-dimensional $X$-relevant faces are of the form $\gamma_{l, ij}$.
As $w_l= (1,0)$ and $y_{ij} = 1$ holds for all $1 \leq i < j \leq n$ these faces define smooth pieces of $X$. 

\vspace{1mm}
\noindent
\emph{Type~6:}
Here the two-dimensional $X$-relevant faces are of the form $\gamma_{ij,l}$.
As $w_l= (\beta_l,1)$ and $w_{ij} = (1,0)$ holds for all $1 \leq i < j \leq n$ these faces define smooth pieces of $X$. 
\end{proof}

\begin{proof}[Proof of Corollaries \ref{cor:FanoClassification} and \ref{cor:FanoNonFull}]
Let $X$ be a smooth intrinsic Grassmannian of type $(2,n)$
that has Picard number two. 
Using Theorem~\ref{thm:classNonFull}
and 
Proposition~\ref{prop:AnticanClass}
we obtain the anticanonical classes in each of the types as

{
\renewcommand*{\arraystretch}{2.5}
\begin{longtable}{c||c}
Type &
$- \mathcal{K}_X$
\\
\hline
\hline
1
&
$
 \left(
 \sum\limits_{j=l}\limits^n 2 \alpha_j + \sum\limits_{l=1}\limits^m \beta_l + 2k - n - 2, \ 2(n-k+1)
 \right)
 $
\\
\hline
2
&
$
 \left(
 \sum\limits_{l=1}\limits^m \beta_l + (n-2)\alpha + 2, \ 
 n+m-2
 \right)
 $
\\
\hline
3
&
$
 \left(
 2 (k-1) + m, \ n
 \right)
 $
\\
\hline
4
&
$
 \left(
 4 + m, \ n
 \right)
 $
\\
\hline
5
&
$
 \left(
 \sum\limits_{j\geq 4} 2 \alpha_j - (n-4)(b_1 + b_2) +m, \ 
 n
 \right)
 $
\\
\hline
6
&
$
 \left(
n + \sum\limits_{l=1}\limits^m \beta_l, \ m
 \right)
 $
\\
\end{longtable}}
\noindent 
and the semiample cones
are described in Theorem \ref{thm:classNonFull}. 
As $X$ is Fano if and only if $- \mathcal{K}_X$ lies in the relative interior of the semiample cone 
$\mathrm{SAmple}(X)$, and 
$X$ is truly almost Fano if and only if $- \mathcal{K}_X$ lies on the boundary of $\mathrm{SAmple}(X)$ we obtain the statement via direct calculation.
The full intrinsic Grassmannians described in Theorem \ref{thm:classification} are of Type~1 with $m =0$. In particular the criteria of Corollary \ref{cor:FanoClassification} directly follow from the criteria for Type~1 by setting $m=0$. 
\end{proof}

\begin{proof}[Proof of Corollary \ref{cor:NumberOfFanos}]
In order to get the number of smooth Fano full intrinsic Grassmannians of type $(2,n)$ with Picard number two, 
we calculate all possible values for $k$ and $\alpha_j$,
where $k \leq j \leq n$, fulfilling the inequality of Corollary~\ref{cor:FanoClassification}:
\begin{equation}\label{equ1}
    (n-k)\alpha_n < \sum_{i=k+1}^{n-1} \alpha_i - \frac{1}{2} n + k - 1.
\end{equation}
As $\alpha_j \leq \alpha_n$ holds for all $j \leq n$, we conclude $0 \leq \alpha_n < k - \frac{n}{2} -1$
and thus $1 + n/2 < k$.
Moreover, we obtain
$$
(\ref{equ1}) \Longleftrightarrow
\sum_{i = {k+1}}^{n-1} (\alpha_n - \alpha_j) \leq k - \lfloor \frac{n}{2} \rfloor - 2 - \alpha_n.
$$
Therefore for fixed $k$ and fixed $\alpha_n$ we need to count all sequences
$0 \leq \alpha_{k+1} \leq \ldots \leq \alpha_{n-1} \leq \alpha_n$ such that the inequality on the right hand side holds. This proves the formula.

In order to complete the proof we show that for fixed $n$, different values of $k$ and $\alpha_j$ 
give rise to non-isomorphic varieties.
For this
let $4 \leq k \leq n$ and $4 \leq k' \leq n$ be integers, fix sequences
$$
0 =  \alpha_k \leq \alpha_{k+1} \leq \ldots \leq \alpha_n
\quad
\text{and}
\quad
0 =  \alpha'_{k'} \leq \alpha'_{k'+1} \leq \ldots \leq \alpha'_n.
$$
and denote with $X$ resp. $X'$ 
the corresponding varieties as in Theorem \ref{thm:classification}
with their respective Cox rings $R$ and $R'$. 

Assume that $X$ and $X'$ are isomorphic.
Then there exists a graded isomorphism $(\varphi,\psi)\colon (R,\mathbb{Z}^2)\rightarrow (R',\mathbb{Z}^2)$, where $\psi$ maps the effective, the moving and the semiample cone of $X$ onto that of $X'$.
We show that the existence of such an isomorphism yields equal sets of defining data.

Assume $k=n$ and thus $\alpha_n = 0$ holds.
We make use of the set of generator degrees
$$\Omega_R := \left\{w\in K; \ R_w \not\subseteq R_{<w} \right\}.$$
Note that any graded isomorphism from $R$ to $R'$ maps $\Omega_R$ onto $\Omega_{R'}$.
As $k = n$ holds, we have  $|\Omega_R| = 2$. 
Thus, as $k' < n$ would imply $|\Omega_{R'}| > 2$, we obtain $k' = n$
and thus $\alpha'_{k'} = \alpha'_n = 0$.

Now, assume $k < n$ holds.
As $\psi$ maps the boundaries of the effective and of the semiample cone of $X$ onto that of $X'$
we conclude that $\psi$ is the 
identity.
In particular, we obtain $\dim(R_{(1,0)})=\dim(R'_{(1,0)})$ and thus $k=k'$. 

Now, assume the sequences of integers $ 0 =  \alpha_k \leq \alpha_{k+1} \leq \ldots \leq \alpha_n$
resp. $ 0 =  \alpha'_{k} \leq \alpha'_{k+1} \leq \ldots \leq \alpha'_n$
are different. Then there exists a minimal $j < k$ with
$\alpha_j \neq \alpha'_j$ and we may assume $\alpha_j < \alpha'_j$.
We conclude
$\alpha_j = \alpha_{j-1}$, because else $\Omega_R$ is not send onto $\Omega_{R'}$.
But this implies that 
the homogeneous component
$R_{(\alpha_j,1)}$ contains all generators that are contained in
$R'_{(\alpha_j,1)} = R'_{(\alpha'_{j-1},1)}$ and new generators, namely
$T_{ij}$, where $i < k$ holds. As these 
are not in the span of $R'_{(\alpha'_{j-1},1)}$
we end up with a contradiction:
$$
\mathrm{dim}(R_{(\alpha_j,1)}) > \mathrm{dim}(R'_{(\alpha_j,1)}).
$$
\end{proof}

\section{Geometry and Fujita's freeness conjecture}\label{sec:geometry}
In this section, we start by discussing geometric aspects of the smooth intrinsic Grassmannians of type $(2,n)$ of Picard number 2, classified in Theorem~\ref{thm:classNonFull}.
Then in Corollary \ref{cor:Fujita}, we  verify Fujita’s freeness conjecture for these varieties. We briefly recall the relevant backgrounds from birational geometry, see \cite{Casa}.
Let $X$ be any normal projective variety and let $D$ be a Weil divisor on $X$. If $[D] \in \mathrm{SAmple}(X)$  then there exist a morphism, known as \emph{contraction}:
$$
\varphi_D \colon X \rightarrow X(D) := \mathrm{Proj}\left(\bigoplus_{n \in \ZZ_{\geq 0}}\Gamma(X, \mathcal{O}_X(nD)\right).
$$
The contraction is called \emph{elementary} if the Picard number of  $X(D)$ is one less than the Picard number of  $X$. There are three possible types of elementary contractions:
\begin{enumerate}
    \item 
    If the class $[D]$ lies on the boundary of the effective cone, then the dimension of $X(D)$ is strictly less than that of $X$ and we call $\varphi_D$ \emph{of fiber~type}.
    \item
    If the class $[D]$ lies on the boundary of the moving cone but not on the boundary of the effective cone, then $\varphi_D$ is birational and contracts precisely one divisor.
    In this situation $\varphi_D$ is called a \emph{birational divisorial contraction.}
    \item
    If the class $[D]$ lies in the interior of the moving cone, then $\varphi_D$ is birational and contracts a subvariety of codimension at least two.
    In this situation $\varphi_D$ is called a \emph{birational small contraction.}
\end{enumerate}

Now, let $X$ be a smooth intrinsic Grassmannian of type $(2,n)$ with 
Picard number two.
Then Construction~\ref{constr:intrinsGrassmannian}
provides an embedding $X \subseteq Z$ into a toric variety $Z$. 
In each of the six types 
the semiample cone of $X$ and $Z$ coincide. Let $[E] \in \mathrm{SAmple}(Z)= \mathrm{SAmple}(X)$ be a  divisor class, where $E$ is a toric divisor on $Z$, and let $D$ denote its restriction to $X$. Then we obtain a commutative diagram
$$
\xymatrix{
X
\ar@{}[r]|{\subseteq}
\ar[d]_{\varphi_D}
&
{Z}
\ar[d]^{\varphi_E}
\\
X(D)
\ar@{}[r]|{\subseteq}
&
Z(E).
}
$$

\begin{remark}\label{rem:splitVectorBundle}
We consider the varieties $X$ of Types~1, 2, 5 and 6 and work  with the toric embedding $X \subseteq Z$ provided by Construction~\ref{constr:intrinsGrassmannian}. 
We denote the number of weights in $\tau^+$ with $s$ and the number of weights in $\tau^-$ with $t$. All weights in $\tau^+$
are of the form $(1,0)$. In $\tau^-$,
we denote with $t_1$ the number of weights of the form $(a_i,1)$, where we may assume
$a_1 \leq \ldots \leq a_{t_1}$. 
Moreover, we have $t_2 := t- t_1$ weight vectors of the form $(a_{t_1 + i},2)$ with $a_{t_1+1} \leq \ldots \leq a_{t_1 + t_2}$:

\begin{center}
\begin{tikzpicture}[scale=0.6]
\path[fill=gray!60!] (0,0)--(9,1.5)--(9,0);
\draw (7,1) circle (0ex) node[below]{\small $\tau_X$};
\draw[-,thick] (0,0)--(9,1.5);
\draw[-,thick] (0,0)--(9,0);
\path[fill, color=black] (1,0) circle (0.5ex);
\path[fill, color=black] (6,1) circle (0.5ex);
\path[fill, color=black] (2.5,1) circle (0.3ex);
\path[fill, color=black] (3,1) circle (0.3ex);
\path[fill, color=black] (3.5,1) circle (0.3ex);
\path[fill, color=black] (4,2) circle (0.5ex);
\path[fill, color=black] (1.5,2) circle (0.3ex);
\path[fill, color=black] (2,2) circle (0.3ex);
\path[fill, color=black] (2.5,2) circle (0.3ex);
\path[fill, color=black] (5,2) circle (0.3ex);
\path[fill, color=black] (5.5,2) circle (0.3ex);
\path[fill, color=black] (6,2) circle (0.3ex);
\draw[-,thick] (-1,0)--(10,0);
\draw[-,thick] (0,-1)--(0,3);
\end{tikzpicture}   
\end{center}
Take any toric divisor $E$ on $Z$ representing the divisor class $(1,0)$.
Then 
$\varphi_E \colon Z \rightarrow Z(E) = \PP^{s-1}$
is 
of fiber type. More precisely,
$Z$
a weighted projectivized split vector bundle
of rank $t-1$ over $\PP^{s-1}$ whose fibers are isomorphic to weighted projective spaces $\PP_{1^{t_1}, 2^{t_2}}$; see e.g.\ \cite{JPMullett} and \cite{CoxLitSch2011}*{Section 7.3} for the case $t_2 =0$.
More precisely, we have
$$
Z \cong \mathrm{Proj}
(\mathcal{O}_{\PP^{s-1}}(a_1) 
\oplus
\mathcal{O}_{\PP^{s-1}}(a_2)
\oplus
\ldots
\oplus
\mathcal{O}_{\PP^{s-1}}(a_t)).
$$
\end{remark}

For the subsequent description of the geometry of the varieties $X$ of Types 1, 2, 5 and 6 we work in the above notation and furthermore denote the restriction of the toric divisor $E$ onto $X$ by $D$. 

\vspace{3mm}
\noindent
\textbf{Type~1.}
\emph{Here, $X$ admits a locally trivial fibration with fibers of dimension ${2(n-k) +3}$ in  $\PP_{1^{t_1},2^{t_2}}$, where 
$t_1 := (k-1)(n-k+1) +m$ and
$t_2 := \binom{n-k+1}{2}$.}

\vspace{-1mm}
\begin{proof}
In this case we have $s = \binom{k-1}{2}$ and $t = t_1 + t_2$. We obtain 
$$
X(D) = \mathrm{Gr}(2, k-1) \subseteq  \PP^{\binom{k-1}{2} -1} = Z(E).$$
As described in Remark \ref{rem:splitVectorBundle}, the fibers of $\varphi_E \colon Z \rightarrow Z(E)$ 
are weighted projective spaces $\PP_{1^{t_1},2^{t_2}}$.
One directly calculates the dimension of the fibers of $X$.
\end{proof}

\vspace{3mm}
\noindent
\textbf{Type~2.}
\emph{Here, $X$ admits a locally trivial fibration with fibers isomorphic to $\PP^{n+m-3}$.}

\vspace{-1mm}
\begin{proof}
In this case we have $s = n-1$ and $t = t_1 = \binom{n-1}{2} +m$. We obtain $X(D) = Z(E) = \PP^{n-2}$. On each fiber
$\varphi^{-1}_E(z)$ the relations $g_{I}$ with $n \in I$
become linear and one directly checks that the ideal generated by these linear relations contains the relations $g_I$ where $n \notin I$.
In particular the fibers are linear subspaces of codimension $\binom{n-2}{2}$ in $\PP^{\binom{n-1}{2}+ m-1}$.
\end{proof}

\vspace{3mm}
\noindent
\textbf{Type~5.}
\emph{Here, $X$ admits a locally trivial fibration with fibers $\mathrm{Gr}(2,n) \subseteq \PP^{ \binom{n}{2} -1}.$}

\vspace{-1mm}
\begin{proof}
In this case we have $s = m$
and $t = t_1 =\binom{n}{2}$. We obtain
$X(D) = Z(E) = \PP^{m-1}$. As the local trivialization of the bundle projection $Z \rightarrow Z(E)$ given in \ref{rem:splitVectorBundle} respects the relations $g_I \in I_{2,n}$ 
the variety $X$ is a locally trivial fibration with fibers $\mathrm{Gr}(2,n) \subseteq \PP^{ \binom{n}{2} -1}.$
\end{proof}

\vspace{3mm}
\noindent
\textbf{Type~6.}
\emph{Here, $X$ is the projectivized split vector bundle
$$
\PP(\mathcal{O}_{\mathrm{Gr}(2,n)}(\beta_1)
\oplus
\mathcal{O}_{\mathrm{Gr}(2,n)}(\beta_2)
\oplus
\ldots
\oplus
\mathcal{O}_{\mathrm{Gr}(2,n)}(\beta_m)).
$$}

\vspace{-5mm}
\begin{proof}
In this case we have $s = \binom{n}{2}$
and $t = t_1 = m$. We obtain $X(D) = \mathrm{V}(I_{2,n}) \subseteq  \PP^{\binom{n}{2}-1} = Z(E)$. Moreover we have
$\varphi^{-1}(\mathrm{V}(I_{2,n})) = X \subseteq Z$. 
Using Remark \ref{rem:splitVectorBundle}, we obtain that
$X$ is as claimed. 
\end{proof}

We turn to the Types~3 and 4, where we obtain divisorial contractions.
We note that for $m=0$ Type~3 is a subcase of Type~1. 

\vspace{3mm}
\noindent
\textbf{Type~3.}
\emph{If $m=1$ holds, $X$ admits a birational divisorial contraction with center of contraction $\mathrm{V}(I_{2,k-1}, T_{ij}; \  j \geq k) \in \PP^{\binom{n}{2} -1}$}

\vspace{-1mm}
\begin{proof}
Let $E$ on $Z$ resp. $D$ on $X$ be the divisors corresponding to 
the generator $T_{12}$. Then 
$\varphi_E$ resp. $\varphi_D$ contract the respective divisors corresponding to $S_1$. We have $X(D) = \mathrm{V}(I_{2,n}) \subseteq \PP^{\binom{n}{2} -1} = Z(E)$.
Moreover, the fan of $Z$ arises out of the fan of $Z(E)$ by a non-barycentric subdivision of the cone over the rays corresponding to $w_{ij}$, where $j \geq k$ holds.
Thus the center of the modification $X \rightarrow X(E)$ is the Grassmannian  $\mathrm{Gr}(2,k-1) \cong \mathrm{V}(I_{2, k-1}, T_{ij}; \ j \geq k)  \in \PP^{\binom{n}{2} -1}$
\end{proof}

\vspace{3mm}
\noindent
\textbf{Type~4.}
\emph{If $m=1$ holds, $X$ admits a birational divisorial contraction with center of contraction $\mathrm{V}(T_{ij};\ j \geq 3) \in \PP^{\binom{n}{2} -1}$}

\vspace{-1mm}
\begin{proof}
Let $E$ on $Z$ resp. $D$ on $X$ be the divisors corresponding to 
the generator $T_{12}$. Then 
$\varphi_E$ resp. $\varphi_D$ contract the respective divisors corresponding to $S_1$. We have $X(D) = \mathrm{V}(I_{2,n}) \subseteq \PP^{\binom{n}{2} -1} = Z(E)$.
Moreover, the fan of $Z$ arises out of the fan of $Z(E)$ by a non-barycentric subdivision of the cone over the rays corresponding to $w_{ij}$, where $j \geq 3$.
Thus the center of the modification $X \rightarrow X(E)$ is the point $\mathrm{V}(T_{ij}; \ j \geq 3)  \in \PP^{\binom{n}{2} -1}$.
\end{proof}

\begin{remark}
Let $X$ be of Type~4 and assume $n= 4$ holds. Then we are in Type~3 of the classification
of smooth intrinsic quadrics of Picard number two 
in \cite{FaHa}*{Thm. 1.1}. In particular in this case
$X$ is the blowing-up of the projective space $\PP^{3 + m}$ centered at ${V(T_{13}T_{24} - T_{14}T_{23}, S_1, \ldots, S_m) \subseteq \PP^{3+m}.}$
\end{remark}

\begin{corollary}\label{cor:Fujita}
Every smooth intrinsic Grassmannian $X$ of type $(2,n)$ with Picard number two fulfills Fujita's freeness conjecture:
For every $m \geq \mathrm{dim}(X) +1$
and every ample divisor $D$ on $X$ 
the divisor $K_X + m \cdot D$ is base point free,
where $K_X$ is a canonical divisor of $X$.
\end{corollary}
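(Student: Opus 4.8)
The plan is to reduce base point freeness to a cone-membership condition and then to verify that condition type by type using the explicit anticanonical class and semiample cone. To avoid a clash with the number $m$ of free generators, denote the Fujita multiplier by $\mu$, so that the claim reads: for every $\mu \geq \dim(X)+1 = 2n+m-4$ and every ample divisor $D$, the class $[K_X+\mu D]$ is base point free.

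First I would establish the key reduction: for a smooth intrinsic Grassmannian $X$ of type $(2,n)$ with Picard number two, a class $w \in \mathrm{Cl}(X)=\ZZ^2$ is base point free if and only if $w \in \mathrm{SAmple}(X)\cap\ZZ^2$. The relevant criterion of \cite{ArDeHaLa} says that $w$ is base point free precisely when for every $X$-relevant face $\gamma_0$ there is a monomial in the variables indexed by $\gamma_0$ of degree $w$; since the generated submonoids grow with $\gamma_0$, it suffices to test the minimal relevant faces. By Lemma \ref{lem:Pic2Weights}(ii) these are two-dimensional, and by Lemma \ref{lem:XbarFaces}(iv) together with Remark \ref{rem:conesAndCo2} smoothness forces their two generating weights to form a $\ZZ$-basis of $\ZZ^2$. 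For such a unimodular pair the monoid the two weights generate is exactly $\mathrm{cone}(\text{both weights})\cap\ZZ^2$, so a suitable monomial exists if and only if $w$ lies in that cone. Intersecting over the minimal relevant faces and invoking the description of $\mathrm{SAmple}(X)$ in Remark \ref{rem:effAmpleEtc} yields the equivalence; note that every monomial in the variables is nonzero in the domain $R(n,m)$, so no vanishing issue arises.

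Granting the reduction, it remains to show that $[K_X+\mu D]=\mu[D]-(-\mathcal K_X)\in \mathrm{SAmple}(X)$. In each of the six types of Theorem \ref{thm:classNonFull} the cone $\mathrm{SAmple}(X)=\mathrm{cone}(v_1,v_2)$ is unimodular, so after the coordinate change carrying $(v_1,v_2)$ to the standard basis the cone becomes the first quadrant; an ample class $[D]$ is then an interior integral point, hence has both coordinates $\geq 1$ (smoothness makes every class Cartier), and $-\mathcal K_X$ acquires integer coordinates $(c_1,c_2)$. Since $\mu d_i \geq \mu$ for $i=1,2$, the class $\mu[D]-(-\mathcal K_X)$ lies in the first quadrant whenever $\mu\geq\max(c_1,c_2)$. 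Thus the entire statement reduces to the single numerical claim $\max(c_1,c_2)\leq \dim(X)+1$ in every type.

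Finally I would verify this numerical claim type by type, reading $-\mathcal K_X$ off Proposition \ref{prop:AnticanClass} and the rays $v_1,v_2$ off the semiample pictures in Theorem \ref{thm:classNonFull}. The second coordinate $c_2$ is the height of $-\mathcal K_X$ — for instance $n$ in Types 3 and 4 — and is bounded by $\dim(X)+1$ using only $k\geq4$ and $n\geq4$; in the types whose free generators have height one, one must remember to add their contribution $m$ to this height, as dictated by the term $\sum_l w_l$ in Proposition \ref{prop:AnticanClass}. The first coordinate $c_1$ is the delicate one: after subtracting the contribution of the slanted ray $v_2$ it takes the shape of a sum of terms such as $\alpha_j-a$, $\beta_l-a$ (Types 1 and 2) or $x_{ij}-\max(x_{ij})$ (Type 5), each of which is $\leq 0$ by the very inequalities defining the type, leaving only a small constant that is again $\leq \dim(X)+1$. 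I expect this last step — controlling $c_1$ uniformly — to be the main obstacle, since a naive estimate that ignores the boundedness constraints $\alpha_j,\beta_l\leq a$ (equivalently, the fact that the upper ray $v_2$ of the semiample cone passes through the \emph{flattest} weight) would grossly overestimate $c_1$ and could even spuriously suggest that Fujita fails; the real content is that these constraints force the large positive contributions inside $-\mathcal K_X$ to be absorbed as $\mu[D]$ advances along the interior direction.
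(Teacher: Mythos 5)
Your first half is exactly the paper's second step: the paper also reduces to showing that every nef class is base point free, using \cite{ArDeHaLa}*{Prop 3.3.2.8} to write $\mathrm{BPF}(X)=\bigcap_{\gamma_0\in\rlv(X)}Q(\gamma_0\cap\ZZ^{\binom{n}{2}+m})$, restricting to the minimal (two-dimensional) relevant faces via Lemma~\ref{lem:Pic2Weights}, and using the unimodularity forced by Lemma~\ref{lem:XbarFaces}~(iv) to conclude that this monoid is saturated with associated cone $\mathrm{SAmple}(X)=\mathrm{Nef}(X)$. That part of your argument is correct and essentially identical to the paper's.

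The gap is in the second half. The paper does \emph{not} verify nef-ness of $K_X+\mu D$ by computation: it simply cites Fujita's theorem \cite{FuFreeness}, which asserts for \emph{every} smooth projective variety that $K_X+\mu D$ is nef once $\mu\ge\dim(X)+1$ and $D$ is ample (this is the standard consequence of the cone theorem bounding lengths of $K_X$-negative extremal rays). You instead reduce nef-ness to the numerical claim $\max(c_1,c_2)\le\dim(X)+1$ in unimodular coordinates and then stop, explicitly flagging the verification of $c_1$ as ``the main obstacle.'' That verification is the entire mathematical content of your route, and it is not carried out; as written, the proposal is a plan rather than a proof. The claim is in fact true (it must be, by the cone theorem), but it is genuinely delicate to check by hand across all six types: for instance, in Type~1 one must include the contribution $(\sum_l\beta_l,\,m)$ of the free variables to \emph{both} coordinates of $-\mathcal{K}_X$, so that $c_1=2k-n-2+\sum_l\beta_l-am\le 2k-n-2$; dropping the $-am$ term (which comes from the second coordinate $2(n-k+1)+m$ entering the coordinate change) makes $c_1$ appear unbounded in $a$ and the bound appear false. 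So either complete the six-case computation carefully, or — much better — replace it with the one-line citation of Fujita's nef-ness result, which is what the statement of the corollary is implicitly inviting and what the paper does.
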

\begin{proof}
In \cite{FuFreeness} Fujita proved that under the above assumptions any of the above divisors $K_X +m\cdot D$ 
is numerical effective. Thus it suffices to prove that any numerical effective divisor of a smooth intrinsic Grassmannian of type $(2,n)$ is base point free.

We consider the monoid $\mathrm{BPF}(X) \subseteq \mathrm{Cl}(X) = K$ of divisor classes admitting a base point free representative. Then due to \cite{ArDeHaLa}*{Prop 3.3.2.8} we have
$$
\mathrm{BPF}(X) = \bigcap_{\gamma_0 \in \mathrm{rlv}(X)} Q(\gamma_0 \cap \ZZ^{\binom{n}{2} + m}).
$$
In particular, the cone over $\mathrm{BPF}(X)$ 
equals the semiample cone $\mathrm{SAmple}(X)$,
which in turn equals the cone of numerically effective divisors as $X$ is a Mori dream space. 
Thus it is only left to show that the monoid 
$\mathrm{BPF}(X)$ is saturated. 
We show that all minimal faces $\gamma_0 \in \mathrm{rlv}(X)$ are saturated. This proves the claim 
as intersections of saturated monoids are saturated.

Due to Lemma \ref{lem:Pic2Weights} all minimal $X$-relevant faces are two-dimensional. 
Let $\gamma_0 = \mathrm{cone}(e, e')$ be such a face.
As $X$ is smooth, Lemma \ref{lem:XbarFaces}~(iv) yields that 
$Q(e)$ and $Q(e')$ form a $\ZZ$-basis of $K = \mathrm{Cl}(X)$. In particular $Q(\gamma_0 \cap \ZZ^{\binom{n}{2} + m})$ is saturated as desired. 
\end{proof}

\bibliographystyle{amsalpha}
\bibliography{References}
\end{document}